\numberwithin{equation}{section}
\newtheorem{Theorem}{Theorem}[section]
\newtheorem{Lemma}[Theorem]{Lemma}
\newtheorem{Proposition}[Theorem]{Proposition}
 { \theoremstyle{definition}

\newtheorem{Remark}[Theorem]{Remark} }
\newcommand{\lieb}{\mathfrak{b}}
\newcommand{\lieg}{\mathfrak{g}}
\newcommand{\lieh}{\mathfrak{h}}
\newcommand{\liek}{\mathfrak{k}}
\newcommand{\liel}{\mathfrak{l}}
\newcommand{\liem}{\mathfrak{m}}
\newcommand{\lien}{\mathfrak{n}}
\newcommand{\liep}{\mathfrak{p}}
\newcommand{\lieu}{\mathfrak{u}}
\newcommand{\CU}{\mathbb{C}[U]}
\newcommand{\Uqg}{U_q(\mathfrak{g})}
\newcommand{\Uqt}{U_q(\mathfrak{t})}
\newcommand{\Uqu}{U_q(\mathfrak{u})}
\newcommand{\UqkS}{U_q(\mathfrak{k}_S)}
\newcommand{\UqlS}{U_q(\mathfrak{l}_S)}
\newcommand{\CqG}{\mathbb{C}_q[G]}
\newcommand{\CqU}{\mathbb{C}_q[U]}
\newcommand{\Cqflag}{\mathbb{C}_q[U / K_S]}
\newcommand{\Cqfullflag}{\mathbb{C}_q[U /T]}
\newcommand{\id}{\mathrm{id}}
\newcommand{\diff}{\mathrm{d}}
\newcommand{\del}{\partial}
\newcommand{\delbar}{\overline{\partial}}
\newcommand{\bbC}{\mathbb{C}}
\newcommand{\bbR}{\mathbb{R}}
\newcommand{\iu}{\boldsymbol{i}}
\newcommand{\weiS}{\rho_S}
\newcommand{\charS}{\xi_{\weiS}}
\newcommand{\matuni}{\mathsf{M}}
\newcommand{\proj}{\mathsf{P}}
\newcommand{\kafo}{\omega}
\newcommand{\kafot}{\tilde{\omega}}
\newcommand{\homroot}{\Delta(\lien_S^+)}
\begin{document}
\allowdisplaybreaks

\newcommand{\arXivNumber}{2003.10305}

\renewcommand{\thefootnote}{}

\renewcommand{\PaperNumber}{098}

\FirstPageHeading

\ShortArticleName{Twisted Hochschild Homology of~Quantum Flag Manifolds and K\"ahler Forms}

\ArticleName{Twisted Hochschild Homology\\ of~Quantum Flag Manifolds and K\"ahler Forms\footnote{This paper is a~contribution to the Special Issue on Noncommutative Manifolds and their Symmetries in honour of~Giovanni Landi. The full collection is available at \href{https://www.emis.de/journals/SIGMA/Landi.html}{https://www.emis.de/journals/SIGMA/Landi.html}}}

\Author{Marco MATASSA}

\AuthorNameForHeading{M.~Matassa}

\Address{OsloMet -- Oslo Metropolitan University, Oslo, Norway}
\Email{\href{mailto:marco.matassa@oslomet.no}{marco.matassa@oslomet.no}}

\ArticleDates{Received March 31, 2020, in final form September 25, 2020; Published online October 03, 2020}

\Abstract{We~study the twisted Hoch\-schild homology of~quantum flag manifolds, the twist being the modular automorphism of~the Haar state. We~prove that every quantum flag manifold admits a~non-trivial class in~degree two, with an~explicit representative defined in~terms of~a certain projection. The corresponding classical two-form, via the Hoch\-schild--Kostant--Rosenberg theorem, is identified with a~K\"ahler form on the flag manifold.}

\Keywords{quantum flag manifolds; twisted Hoch\-schild homology; K\"ahler forms}

\Classification{17B37; 20G42; 16E40}

\renewcommand{\thefootnote}{\arabic{footnote}}
\setcounter{footnote}{0}

\section{Introduction}

The study of~quantum homogeneous spaces corresponding to compact quantum groups is an~active area of~research.
In~this paper we study certain analogues of~differential forms on quantum flag manifolds, which constitute a~rich class of~quantum homogeneous spaces.

Let us first explain how \emph{Hoch\-schild homology} fits into the picture.
One possible approach to defining differential forms on non-commutative spaces is based on the \emph{Hoch\-schild--Kostant--Rosenberg} theorem~\cite{hkr}, or rather its continuous version. According to this theorem, the graded-commutative algebra of~differential forms $\Omega^\bullet(M)$ on a~smooth manifold $M$ is isomorphic to~$HH_\bullet(C^\infty(M))$, the Hoch\-schild homology of~the algebra $C^\infty(M)$ of~smooth functions on $M$.
Since $HH_\bullet(A)$ makes sense for any associative algebra $A$, it is a~candidate
for the role of~differential forms on the non-commutative space associated with $A$.

This works well for certain algebras, most notably the non-commutative tori~\cite{connes}, but it can be quite degenerate for other non-commutative spaces.
In~particular consider the compact quantum groups $\CqU$, with $U$ a~compact simple Lie group.
Their Hoch\-schild homology was computed by~Feng and Tsygan in~\cite{feng-tsygan}.
They find that the Hoch\-schild homological dimension of~$\CqU$ is equal to~$\mathrm{rank}(U)$, which is in~stark constrast with that of~$C^\infty(U)$ being $\mathrm{dim}(U)$.
This phenomenon is usually referred to as a~\emph{dimension drop}.

Eventually it was discovered that the dimension drop for~$\CqU$ can be avoided by~introducing an~appropriate \emph{twisting} in~Hoch\-schild homology.
This was first observed by~explicit computations for~$\bbC_q[SU_2]$ in~\cite{twisted-su2}, and later elucidated by~Brown and Zhang in~\cite{brzh}.
They also show that this twisted Hoch\-schild homology and the corresponding cohomology satisfy a~general version of~\emph{Poincaré duality}, as described by~Van den Bergh in~\cite{vandenbergh}.
The appropriate twisting turns out to be the \emph{modular automorphism} of~$\CqU$, which we will denote by~$\theta$, as shown by~Dolgushev in~the setting of~Poisson geometry~\cite{dolgushev}.

All of~the above can be considered also for \emph{quantum flag manifolds}.
These are certain $*$-sub\-algebras of~$\CqU$ denoted by~$\Cqflag$, with $U / K_S$ a~classical flag manifold.
By the preceding discussion the twisted Hoch\-schild homology $HH^\theta_\bullet\big(\Cqflag\big)$ is an~interesting object to study.
This has been completely determined for the quantum $2$-sphere in~\cite{hadfield}, but it is not known in~general.
In~\cite{twist-hoch} we have studied the case of~degree two for the quantum full flag manifolds $\Cqfullflag$, with the result that $HH^\theta_2\big(\Cqfullflag\big)$ has dimension at least $\mathrm{rank}(U)$, as well as providing explicit representatives for these classes.

An important motivation for studying the twisted Hoch\-schild homology of~$\Cqflag$ in~degree two is given by~\emph{K\"ahler forms}.
Indeed, classically the flag manifolds $U / K_S$ are K\"ahler and hence admit these two-forms with special properties.
The study of~analogues of~K\"ahler forms in~the quantum setting is important from the point of~view of~non-commutative complex and K\"ahler geometry, see for instance~\cite{besm13, obu16, obu17}.

We~now come to the main results of~this paper.
Our first result shows the non-triviality of~twisted Hoch\-schild homology in~degree two for any quantum flag manifold.

\begin{theorem*}
Given any quantum flag manifold $\Cqflag$, there exists a~non-trivial class $[C(\proj)] \in HH^\theta_2\big(\Cqflag\big)$ represented by~an~explicit element $C(\proj) \in \Cqflag^{\otimes 3}$.
\end{theorem*}

The explicit form involves a~certain projection $\proj$ with entries in~$\Cqflag$.
Next, the representative $C(\proj)$ admits an~appropriately defined classical limit for~$q \to 1$.
Under the Hoch\-schild--Kostant--Rosenberg theorem, this classical element corresponds to a~two-form on $U / K_S$.
Our second result shows that this is a~K\"ahler form.

\begin{theorem*}
Let $\omega_\proj \in \Omega^2(U / K_S)$ be the form corresponding to~$[C(\proj)]$ under the classical limit.
Then $\omega_\proj$ is a~K\"ahler form on $U / K_S$.
\end{theorem*}

We~will actually construct the K\"ahler form $\omega_\proj$ entirely in~the classical setting, in~a~way which is suitable for comparison with the quantum case.

Finally let us mention the connection with another common approach to differential forms on quantum spaces, due to Woronowicz~\cite{woronowicz}.
In~this approach, given an~algebra with an~action of~a compact quantum group, we introduce the structure of~a \emph{differential calculus} on the given algebra, together with various requirements about the action of~the compact quantum group.
In~general we have many inequivalent choices of~differential calculi, but the situation is much better for the quantum \emph{irreducible} flag manifolds $\Cqflag$.
In~this case it turns out that there is a~unique analogue of~the de Rham complex, which enjoys essentially all the classical properties, as shown by~Heckenberger and Kolb in~\cite{heko06}.
Within this setting, there is also a~notion of~K\"ahler forms introduced in~\cite{obu17}.
Quantum irreducible flag manifolds were shown to admit K\"ahler forms in~this sense in~\cite{mat-kahler}.
Finally we will show that these can also be identified with the forms $\omega_\proj$ in~the classical limit.

The structure of~the paper is as follows.
In~Section~\ref{sec:notations} we recall various preliminaries on Lie algebras and quantum groups and fix some notation.
In~Section~\ref{sec:hochschild} we summarize some results from~\cite{twist-hoch} on twisted Hoch\-schild homology of~quantum flag manifolds.
In~Section~\ref{sec:quantum-classes} we prove that all quantum flag manifolds admit non-trivial classes in~degree two.
In~Section~\ref{sec:kahler-forms} we introduce some K\"ahler forms on classical flag manifolds, from a~point of~view suitable for comparison with the quantum case.
Finally, in~Section~\ref{sec:comparison} we compare the classical limit of~the quantum classes with the K\"ahler forms obtained before.

\section{Notation and preliminaries}
\label{sec:notations}

In~this section we fix most of~our notation and recall various preliminary notions.
These include Lie algebras and parabolic subalgebras on the classical side, and quantized enveloping algebras and coordinate rings on the quantum side.
In~particular we try to adopt notation that illustrate the link between the two sides as much as possible.

\subsection{Lie algebras and Lie groups}

Let $\lieg$ be a~complex simple Lie algebra with fixed Cartan subalgebra $\lieh$.
We~denote by~$\Delta(\lieg)$ the roots of~$\lieg$ with respect to~$\lieh$, and by~$\Delta^\pm(\lieg)$ a~choice of~positive/negative roots.
We~write $\{\alpha_i\colon i \in I\}$ for the simple roots and $\{\omega_i\colon i \in I\}$ for the fundamental weights.
We~denote by~$(\cdot, \cdot)$ the bilinear form on $\lieh^*$ obtained from the Killing form, rescaled in~such a~way that $(\alpha, \alpha) = 2$ for the short roots.

Given $\alpha \,{\in}\, \Delta^+(\lieg)$, we choose root vectors $e_\alpha \,{\in}\, \lieg_\alpha$ and $f_\alpha \,{\in}\, \lieg_{-\alpha}$ normalized as in~\cite[The\-o\-rem~6.6]{knapp}.
In~particular we have $[e_\alpha, f_\alpha] = h_\alpha$ and $\alpha(h_\beta) = (\alpha, \beta)$.

We~denote by~$\lieu$ the \emph{compact real form} of~$\lieg$. It is given by
\[
\lieu = \bigoplus_{i \in I} \bbR \iu h_{\alpha_i} \oplus \bigoplus_{\alpha \in \Delta^+(\lieg)} \bbR (e_\alpha - f_\alpha) \oplus \bigoplus_{\alpha \in \Delta^+(\lieg)} \bbR \iu (e_\alpha + f_\alpha).
\]
Here and in~the following $\iu \in \bbC$ will denote the imaginary unit.

Corresponding to the Lie algebras $\lieg$ and $\lieu$, we have the connected, simply-connected Lie groups $G$ and $U$.
We~will also denote by~$T$ the maximal torus of~$U$.

\subsection{Parabolic subalgebras and flag manifolds}

Let $S$ be a~subset of~the simple roots, that is $S \subset I$.
Corresponding to this choice, we define the roots
\[
\Delta(\liel_S) := \mathrm{span} \{ \alpha_i\colon i \in S \} \cap \Delta(\lieg).
\]
In~terms of~these we define the \emph{Levi factor} corresponding to~$S$ by
\[
\liel_S := \lieh \oplus \bigoplus_{\alpha \in \Delta(\liel_S)} \lieg_\alpha.
\]
It is a~reductive Lie subalgebra containing the Cartan subalgebra.
We~also define
\[
\Delta(\lien_S^\pm) := \Delta^\pm(\lieg) \backslash \Delta^\pm(\liel_S).
\]
In~other words, the set $\Delta(\lien_S^+)$ contains the positive roots which are not in~$\Delta(\liel_S)$, and similarly for~$\Delta(\lien_S^-)$.
In~terms of~these roots we define
\[
\lien_S^\pm := \bigoplus_{\alpha \in \Delta(\lien_S^\pm)} \lieg_\alpha.
\]
These are also Lie subalgebras of~$\lieg$, which we call the positive and negative \emph{nilradical} corresponding to~$S$.
They are $\liel_S$-modules with respect to the adjoint action.

In~terms of~the previously defined subalgebras we have the decomposition
\[
\lieg = \lien_S^+ \oplus \liel_S \oplus \lien_S^-.
\]
We~call $\liep_S := \liel_S \oplus \lien_S^+$ the \emph{standard parabolic} subalgebra corresponding to~$S$.
Here standard parabolic means that $\liep_S$ is a~subalgebra containing the \emph{standard Borel} subalgebra
\[
\lieb := \lieh \oplus \bigoplus_{\alpha \in \Delta^+(\lieg)} \lieg_\alpha.
\]
In~the case $S = \varnothing$ we have $\liep_S = \lieb$.
We~also note that $\lieg / \liep_S \cong \lien_S^-$ as $\liel_S$-modules.

In~terms of~the compact real form $\lieu$, we will consider
\[
\liek_S := \liel_S \cap \lieu, \qquad
\liem_S := (\lien_S^+ \oplus \lien_S^-) \cap \lieu.
\]
We~have that $\liek_S$ is a~Lie subalgebra of~$\lieu$, but this is not the case for~$\liem_S$.

Corresponding to these Lie subalgebras, we have the (generalized) \emph{flag manifolds}.
These are homogeneous spaces of~the form $G / P_S$, where $P_S$ is the parabolic subgroup with Lie algebra $\liep_S$.
This definition shows that they are \emph{complex} manifolds.
It is also possible to give a~realization in~terms of~$U$, the compact real form of~$G$, by~the \emph{diffeomorphism}
\[
G / P_S \cong U / K_S, \qquad K_S := P_S \cap U.
\]
The realization $U / K_S$ makes it clear that they are \emph{compact} manifolds.
Finally there is also a~projective realization of~the flag manifolds, which we will describe in~Section~\ref{sec:quantum-classes}.

Some notable subclasses are the following.
For $S = \varnothing$ we have the homogeneous spaces $G / B \cong U / T$, called the \emph{full} flag manifolds.
At the other extreme we have the \emph{irreducible} flag manifolds, corresponding to the case $S = I \backslash \{t\}$ with $\alpha_t$ having multiplicity $1$ in~the highest root of~$\lieg$. These include the Grassmannians, for instance.

\subsection{Quantized enveloping algebras}

We~will use the conventions of~\cite{klsc}, which will be our main reference for this part.
Given $0 < q < 1$, the \emph{quantized enveloping algebra} $\Uqg$ is a~Hopf algebra deformation of~the enveloping algebra $U(\lieg)$ defined as follows.
It has generators $\{ K_i,\,E_i,\, F_i \}_{i = 1}^r$, with $r := \mathrm{rank}(\lieg)$, and relations as in~\cite[Section~6.1.2]{klsc}.
In~particular, the comultiplication, antipode and counit are given by
\begin{gather*}
\begin{aligned}
&\Delta(K_i) = K_i \otimes K_i,\quad
&&\Delta(E_i) = E_i \otimes K_i + 1 \otimes E_i,\quad
&&\Delta(F_i) = F_i \otimes 1 + K_i^{-1} \otimes F_i,
\\
&S(K_i) = K_i^{-1},
&&S(E_i) = - E_i K_i^{-1},
&&S(F_i) = - K_i F_i,
\\
&\varepsilon(K_i) = 1,
&&\varepsilon(E_i) = 0,
&&\varepsilon(F_i) = 0.
\end{aligned}
\end{gather*}
Given $\lambda = \sum_{i = 1}^r n_i \alpha_i$ we will write $K_\lambda := K_1^{n_1} \cdots K_r^{n_r}$.
Let $\rho := \frac{1}{2} \sum_{\alpha > 0} \alpha$ be the half-sum of~the positive roots of~$\lieg$.
Then we have $S^2(X) = K_{2 \rho} X K_{2 \rho}^{-1}$ for any $X \in \Uqg$.

We~will also consider a~$*$-structure on $\Uqg$, which in~the classical case corresponds to the compact real form $\lieu$.
We~can take for instance
\[
K_i^* = K_i, \qquad
E_i^* = K_i F_i, \qquad
F_i^* = E_i K_i^{-1}.
\]
The precise formulae are not so important here, as any equivalent $*$-structure will work equally well for our purposes.
We~will write $\Uqu := (\Uqg, *)$ when we consider $\Uqg$ endowed with the $*$-structure corresponding to the compact real form.

We~will also consider a~quantum analogue of~the Levi factor $\liel_S$, following~\cite{quantum-flag}.
The \emph{quantized Levi factor} $\UqlS$ is defined by
\[
\UqlS := \langle K_i, \, E_j, \, F_j \colon i \in I, \, j \in S \rangle \subset \Uqg.
\]
Here $\langle \cdot \rangle$ denotes the subalgebra generated by~the given elements in~$\Uqg$.
It is easily verified that $\UqlS$ is a~Hopf subalgebra.
Moreover it is a~Hopf $*$-subalgebra with $*$ corresponding to the compact real form.
Taking the $*$-structure into account we write $\UqkS := (\UqlS, *)$.
In~the special case $S = \varnothing$ we write instead $\Uqt := \langle K_i\colon i \in I \rangle$ with its $*$-structure.

\subsection{Quantized coordinate rings}

The \emph{quantized coordinate ring} $\CqG$ is defined as a~subspace of~the linear dual $\Uqg^*$.
We~take the span of~all the matrix coefficients of~the finite-dimensional irreducible representations $V(\lambda)$ (see below).
It becomes a~Hopf algebra by~duality in~the following manner: given $X$, $Y \in \Uqg$ and $a$, $b \in \CqG$ we define
\begin{gather*}
(a b)(X) := (a \otimes b) \Delta(X), \qquad
\Delta(a) (X \otimes Y) := a(X Y),
\\
S(a)(X) := a(S(X)), \qquad
1(X) := \varepsilon(X), \qquad
\varepsilon(a) := a(1).
\end{gather*}
Moreover it becomes a~Hopf $*$-algebra by~setting
\[
a^*(X) := \overline{a(S(X)^*)}.
\]
We~write $\CqU := (\CqG, *)$ for~$\CqG$ endowed with this $*$-structure.

We~have a~left action $\triangleright$ and a~right action $\triangleleft$ of~$\Uqg$ on $\CqG$ given by \[
(X \triangleright a)(Y) := a(Y X), \qquad
(a \triangleleft X)(Y) := a(X Y).
\]
Using the action of~$\Uqg$ on $\CqG$ we can define quantum analogues of~the (generalized) flag manifolds.
The \emph{quantum flag manifold} $\Cqflag$ is defined by
\[
\Cqflag := \{ a\in \CqU\colon X \triangleright a = \varepsilon(X) a, \, \forall\, X \in \UqkS \}.
\]
Notice that $\Cqflag = \CqU^{\UqkS}$, the invariants with respect to the action of~$\UqkS$.
In~the special case $S = \varnothing$ we will write $\Cqfullflag := \CqU^{\Uqt}$, as this case corresponds to the quantum analogue of~the full flag manifold $U / T$.

\subsection{Matrix coefficients}\label{sec:mat-coeff}

The representation theory of~$\Uqg$ is essentially the same as that of~$U(\lieg)$, hence of~$\lieg$.
In~particular we have the analogue of~the highest weight modules~$V(\lambda)$ for any dominant weight~$\lambda$, which we will denote by~the same symbol.
In~any case, given a~finite-dimensional representa\-tion~$V$, we define its \emph{matrix coefficients} by
\[
\big(c^V_{f, v}\big)(X) := f(X v), \qquad f \in V^*, \quad v \in V, \quad X \in \Uqg.
\]
These elements span $\CqG$, according to the description given above.

In~particular we will be interested in~unitary representations.
We~say that an~inner product $(\cdot, \cdot)$ on $V$ is \emph{$\Uqu$-invariant} if it satisfies
\[
(X v, w) = (v, X^* w), \qquad \forall\, v, w \in V, \quad \forall\, X \in \Uqu.
\]
Here we use the $*$-structure of~$\Uqu$.
It is well-known that an~$\Uqu$-invariant inner product exists on every representation $V(\lambda)$, and it is unique up to a~constant.
Let $\{v_i\}_i$ be an~orthonormal weight basis of~$V(\lambda)$ with respect to~$(\cdot, \cdot)$, and write $\lambda_i$ for the \emph{weight} of~$v_i$.
Also denote by~$\{f^i\}_i$ the dual basis of~$V(\lambda)^*$.
With this notation we set
\[
\big(u^{V(\lambda)}\big)^i_j := C^{V(\lambda)}_{f^i, v_j}.
\]
We~will often omit the superscript $V(\lambda)$ from the notation, as we will mostly work with one fixed representation in~the following.
Note that we have
\[
u^i_j(X) = f^i(X v_j) = (v_i, X v_j).
\]

\subsection{Modular automorphism}

The Hopf algebras $\CqU$ (or rather their completions as $C^*$-algebras) are examples of~\emph{compact quantum groups}.
According to the general theory, there is a~unique state $h\colon \CqU \to \bbC$, called the \emph{Haar state}, satisfying the properties
\[
(h \otimes \id) \circ \Delta(a) = h(a) = (\id \otimes h) \circ \Delta(a), \qquad \forall\, a \in \CqU.
\]
This state is not a~trace, but instead we have the property
\[
h(a b) = h(b \theta(a)), \qquad \forall \, a, b \in \CqU,
\]
where $\theta$ is the \emph{modular automorphism} corresponding to the Haar state.

The modular automorphism has a~simple expression in~terms of~the action of~$\Uqu$ on~$\CqU$.
Recall that $S^2(X)\, {=}\, K_{2 \rho} X K_{2 \rho}^{-1}$ for all $X \,{\in}\, \Uqu$, from which it follows that $S^2(a)\,{ =}\,K_{2 \rho}^{-1} \triangleright a\triangleleft K_{2 \rho}$ for all $a \in \CqU$.
By~\cite[Chapter 11, Proposition~34]{klsc} we have
\[
\theta(a) = K_{2 \rho} \triangleright a \triangleleft K_{2 \rho}, \qquad \forall\, a \in \CqU.
\]
In~particular, for the unitary matrix coefficients $u^i_j$ we have
\[
\theta(u^i_j) = q^{(2 \rho, \lambda_i + \lambda_j)} u^i_j,
\]
where $\lambda_i$ denotes the weight of~the basis element $v_i$.

\section{Some results on twisted Hoch\-schild homology}
\label{sec:hochschild}

In~this section we recall some basics of~(twisted) Hoch\-schild homology, as well as some results obtained in~\cite{twist-hoch}.
In~particular we discuss how to construct certain $2$-cycles on quantum flag manifolds in~terms of~appropriate projections.

\subsection{(Twisted) Hoch\-schild homology}

\emph{Hoch\-schild homology} is a~homology theory for associative algebras, which we consider here to be over $\mathbb{C}$.
The main reference for this section is~\cite{loday}.
Let $A$ be an~associative algebra and $M$ be an~$A$-bimodule.
Write $C_n (A, M) := M \otimes A^{\otimes n}$. The \emph{Hoch\-schild boundary} is the linear map $\mathrm{b}\colon C_n (A, M) \to C_{n - 1} (A, M)$ given by
\begin{gather*}
\mathrm{b} (m \otimes a_1 \otimes \cdots \otimes a_n) := m a_1 \otimes \cdots \otimes a_n
+ \sum_{i = 1}^{n - 1} (-1)^i m \otimes a_1 \otimes \cdots \otimes a_i a_{i + 1} \otimes \cdots \otimes a_n
 \\ \hphantom{\mathrm{b} (m \otimes a_1 \otimes \cdots \otimes a_n) :=}
{} + (-1)^n a_n m \otimes a_1 \otimes \cdots \otimes a_{n - 1}.
\end{gather*}
It satisfies $\mathrm{b}^2 = 0$, hence we have corresponding homology groups denoted by~$H_\bullet(A, M)$.
We~will also use the notation $HH_\bullet(A) := H_\bullet(A, A)$.
Hoch\-schild homology can also be defined in~terms of~derived functors as $H_n(A, M) = \mathrm{Tor}^{A^e}_n(A, M)$, where $A^e := A \otimes A^\mathrm{op}$.
There is a~corresponding dual cohomology theory, whose groups are denoted by~$H^n(A, M)$.

A natural choice of~bimodule is given by~$M = A$, in~which case we talk about \emph{the} Hoch\-schild homology of~$A$.
Here we will focus on the \emph{twisted bimodules} $M = {}_\sigma A$, which are defined as follows: as a~vector space we have $M = A$, but the bimodule structure is given by~$a \cdot b \cdot c = \sigma(a) b c $, where $\sigma \in \mathrm{Aut}(A)$.
In~this case we will use the notation $HH^\sigma_\bullet(A) := H_\bullet(A, {}_\sigma A)$ and refer to it as the \emph{twisted Hoch\-schild homology} of~$A$ (with twist $\sigma$).
Notice that we could also introduce a~twist for the right multiplication, but as bimodules this gives nothing new.

\begin{Remark}
The definition of~twisted Hoch\-schild homology $HH^\sigma_\bullet(A)$ presented here is essen\-ti\-ally as in~\cite{twisted-su2}, while the original cohomological setting appeared first in~\cite{kmt}.
There is also a~relation with the notion of~\emph{braided Hoch\-schild homology} introduced in~\cite{akrami-majid} and~\cite{baez}, for a~discussion of~these matters see~\cite[Example~3.9]{braided-homology}.
\end{Remark}

In~the case when $A$ is the algebra of~functions on some smooth space $X$, the Hoch\-schild homo\-logy $HH_\bullet(A)$ of~$A$ is related to the differential forms on the space $X$.
This is the \emph{Hoch\-schild--Kostant--Rosenberg} theorem~\cite{hkr}, see also~\cite[Theorem~3.4.4]{loday}.
Recall that for a~commutative unital algebra $A$, we have the $A$-module of~differential forms $\Omega^\bullet(A) := \bigwedge^\bullet_A \Omega^1(A)$ constructed from the module of~K\"ahler differentials $\Omega^1(A)$.

\begin{theorem*}[Hoch\-schild--Kostant--Rosenberg]
Let $A$ be a~commutative smooth algebra over $\bbC$.
Then there is an~isomorphism of~graded $\mathbb{C}$-algebras $HH_\bullet(A) \cong \Omega^\bullet(A)$.
\end{theorem*}

For the notion of~smooth algebra see~\cite[Section~3.4.1]{loday}.
The algebra structure on $HH_\bullet(A)$ is~given by~the \emph{shuffle product}, which relies on commutativity of~$A$, see~\cite[Section~4.2]{loday}.
There is also a~continuous version of~this theorem, essentially due to Connes~\cite{connes}, which allows to consider smooth forms as opposed to algebraic ones.

Finally we note that, at the level of~chains, the map $A^{\otimes n + 1} \to \Omega^n(A)$ is given by
\[
a_0 \otimes a_1 \otimes \cdots a_n \mapsto a_0 \diff a_1 \wedge \cdots \wedge \diff a_n.
\]

\subsection{Some results}

We~will now focus on the quantum flag manifolds $\Cqflag$.
We~will recall some results on their twisted Hoch\-schild homology and cohomology from~\cite{twist-hoch}.

Fix an~irreducible representation $V(\lambda)$ of~$\Uqu$ and write $N := \dim V(\lambda)$.
Let $\{v_i\}_{i = 1}^N$ be~an~ort\-ho\-normal weight basis with respect to an~$\Uqu$-invariant inner product, and write $\lambda_i$ for~the weight of~$v_i$.
Denote by~$u^i_j = (u^{V(\lambda)})^i_j$ the unitary matrix coefficients.

Denote by~$\mathrm{Mat}(\CqU)$ the set of~all matrices with entries in~$\CqU$.
Given $a$, $b \in \{ 1, \dots, N \}$, we define the $N \times N$-matrix $\matuni^a_b \in \mathrm{Mat}(\CqU)$ with entries
\[
(\matuni^b_a)^i_j := u^i_a \big(u^j_b\big)^*, \qquad
i, j \in \{ 1, \dots, N \}.
\]
The $*$-structure is extended to matrices by~the conjugate transpose, that is $(M^*)^i_j := \big(M^j_i\big)^*$ for~$M \in \mathrm{Mat}(\CqU)$.
We~also define the \emph{quantum trace} by
\[
\mathop{\rm Tr}\nolimits_q\big(\matuni^b_a\big) := \sum_{i = 1}^N q^{(2 \rho, \lambda_i)} \big(\matuni^b_a\big)^i_i.
\]
The matrices $\matuni^b_a$ behave like matrix units, as shown in~\cite[Proposition~3.3]{twist-hoch}.

\begin{Proposition}
\label{prop:matrix-units}
The matrices $\big\{\matuni^b_a\big\}_{a, b}$ are linearly independent and satisfy
\[
(\matuni^a_b)^* = \matuni^b_a, \qquad
\matuni^a_b \matuni^c_d = \delta^a_d \matuni^c_b, \qquad
\mathop{\rm Tr}\nolimits_q(\matuni^a_b) = \delta^a_b q^{(2 \rho, \lambda_a)}.
\]
\end{Proposition}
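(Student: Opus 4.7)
The plan is to reduce all four statements to the unitarity of the matrix $u = (u^i_j)$ with entries in $\CqU$. I would first reinterpret $\matuni^b_a$ as a conjugation of a standard matrix unit: viewing $u$ as an element of $\mathrm{End}(V(\lambda)) \otimes \CqU$, one checks directly from the definitions that
\[
\matuni^b_a = u \cdot \big(E^{a,b} \otimes 1\big) \cdot u^*,
\]
where $E^{a,b}$ denotes the standard matrix unit with a $1$ in position $(a,b)$. The key input is the unitarity $u u^* = u^* u = 1 \otimes 1$, which just encodes the orthonormality of $\{v_i\}$ together with the identification $(u^i_j)^* = S(u^j_i)$ coming from the compatibility of $*$ with the $\Uqu$-invariant inner product.

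With this reformulation three of the four claims become formal. For linear independence, conjugating any relation $\sum_{a,b} c^a_b \matuni^a_b = 0$ by $u^*$ on the left and $u$ on the right uses $u^* u = 1$ to turn it into $\sum_{a,b} c^a_b E^{b,a} = 0$, forcing all $c^a_b = 0$ since the $E^{b,a}$ form a basis of $\mathrm{End}(V(\lambda))$. The involutivity $(\matuni^b_a)^* = \matuni^a_b$ is immediate from $(u E u^*)^* = u E^* u^*$ combined with $(E^{a,b})^* = E^{b,a}$. The multiplication rule $\matuni^a_b \matuni^c_d = \delta^a_d \matuni^c_b$ collapses after cancelling the inner $u^* u$ and using $E^{b,a} E^{d,c} = \delta^a_d E^{b,c}$.

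The quantum trace identity is the only genuinely non-formal step. Unpacking the definition and using $(u^i_a)^* = S(u^a_i)$, it reduces to proving
\[
\sum_i q^{(2\rho, \lambda_i)} \, u^i_b \, S(u^a_i) = \delta^a_b \, q^{(2\rho, \lambda_a)}.
\]
Here my plan is to start from the antipode axiom $\sum_i u^a_i S(u^i_b) = \delta^a_b$, apply $S$ to both sides (which reverses the product order) to obtain $\sum_i S^2(u^i_b) \, S(u^a_i) = \delta^a_b$, and then substitute the matrix-coefficient form $S^2(u^i_b) = q^{(2\rho, \lambda_i - \lambda_b)} u^i_b$ of the relation $S^2(X) = K_{2\rho} X K_{2\rho}^{-1}$ already recalled in Section~\ref{sec:notations}. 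Pulling the overall factor $q^{-(2\rho, \lambda_b)}$ out of the sum then yields the stated identity, noting that the Kronecker delta forces $\lambda_a = \lambda_b$.

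The main obstacle is purely bookkeeping: the subscript $a$ of $\matuni^b_a$ indexes the $u^i_a$ factor rather than the $(u^j_b)^*$ factor, so one must align the upper/lower index conventions carefully against the reformulation $\matuni^b_a = u E^{a,b} u^*$; one must also track the non-commutativity of $\CqU$ and the various weight exponents. Beyond that, the proposition is essentially a direct consequence of the unitarity of $u$ together with the modular identity $S^2 = K_{2\rho}(\cdot) K_{2\rho}^{-1}$.
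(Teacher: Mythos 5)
Your proof is correct: the conjugation formula $\matuni^b_a = u\big(E^{a,b}\otimes 1\big)u^*$ checks out against the definition $(\matuni^b_a)^i_j = u^i_a \big(u^j_b\big)^*$, the unitarity $uu^* = u^*u = 1$ is indeed the antipode axioms combined with $\big(u^i_j\big)^* = S\big(u^j_i\big)$, and your derivation of the quantum trace identity from $\sum_i u^a_i S\big(u^i_b\big) = \delta^a_b$ via $S^2\big(u^i_b\big) = q^{(2\rho,\lambda_i-\lambda_b)}u^i_b$ is exactly right. The paper itself gives no proof of Proposition~\ref{prop:matrix-units}, deferring to \cite[Proposition~3.3]{twist-hoch}, and the argument there rests on the same two inputs (unitarity of $u$ and the modular relation $S^2 = K_{2\rho}(\cdot)K_{2\rho}^{-1}$) carried out entrywise, so your global matrix-unit packaging is essentially the same proof in cleaner notation.
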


\begin{Remark}
These matrices are denoted by~$\mathsf{N}^b_a$ in~\cite{twist-hoch}.
In~the cited paper we also considered the matrices $\mathsf{M}^b_a$ given by~$(u^m_i)^* u^n_j$, but we will not use them here.
\end{Remark}

In~particular, the elements $\proj_a := \matuni^a_a$ are self-adjoint projections of~``quantum rank one".
We~will use them to construct certain twisted Hoch\-schild $2$-cycles on $\CqU$.

\begin{Remark}
It is worth noting that we have $\proj_a \in \mathrm{Mat}\big(\Cqfullflag\big)$, essentially by~construction.
On the other hand, we need additional conditions to get $\proj_a \in \mathrm{Mat}\big(\Cqflag\big)$.
\end{Remark}

Let us consider the elements $C(\proj_a) \in \CqU^{\otimes 3}$ defined by
\[
C(\proj_a) := \mathop{\rm Tr}\nolimits_q((2 \proj_a - 1) \otimes \proj_a \otimes \proj_a).
\]
Here the quantum trace is extended in~the obvious way, namely
\[
C(\proj_a) = \sum_{i, j, k = 1}^N q^{(2 \rho, \lambda_i)} (2 \proj_a - 1)^i_j \otimes (\proj_a)^j_k \otimes (\proj_a)^k_i.
\]
Recall that $\theta$ denotes the modular automorphism of~$\CqU$, which acts by~$\theta\big(u^i_j\big) = q^{(2 \rho, \lambda_i + \lambda_j)} u^i_j$.
The following result can be found in~\cite[Proposition~5.1]{twist-hoch}.

\begin{Proposition}
\label{prop:hoch-classes}
The element $C(\proj_a) \in \CqU^{\otimes 3}$ is a~$2$-cycle in~the $($normalized$)$ twisted Hoch\-schild complex, hence it defines a~class
\[
[C(\proj_a)] \in HH^\theta_2(\CqU).
\]
Moreover, if $(\proj_a)^i_j \in \Cqflag$ for all entries, then we also have a~class
\[
[C(\proj_a)] \in HH^\theta_2\big(\Cqflag\big).
\]
\end{Proposition}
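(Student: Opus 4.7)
The plan is to establish that $C(\proj_a)$ is a cycle by a direct computation of its Hoch\-schild boundary, exploiting the projection identities satisfied by $\proj_a$ together with the specific $q$-weights produced by $\mathop{\rm Tr}\nolimits_q$. On the twisted bimodule ${}_\theta \CqU$ the boundary on a $2$-chain is
\[
b(m \otimes a_1 \otimes a_2) = m a_1 \otimes a_2 - m \otimes a_1 a_2 + \theta(a_2) m \otimes a_1.
\]
Applied to $C(\proj_a) = \sum_{i,j,k} q^{(2\rho,\lambda_i)} (2\proj_a - 1)^i_j \otimes (\proj_a)^j_k \otimes (\proj_a)^k_i$, this produces three pieces which I would simplify using the matrix identities $\proj_a^2 = \proj_a$ and $(2\proj_a - 1)\proj_a = \proj_a(2\proj_a - 1) = \proj_a$ from Proposition~\ref{prop:matrix-units}.

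For the first piece, $(2\proj_a-1)\proj_a = \proj_a$ reduces it to $\sum q^{(2\rho,\lambda_i)} (\proj_a)^i_k \otimes (\proj_a)^k_i$. For the second, $\proj_a^2 = \proj_a$ together with the expansion $2\proj_a - 1$ yields $-2\sum q^{(2\rho,\lambda_i)} (\proj_a)^i_j \otimes (\proj_a)^j_i + 1 \otimes \mathop{\rm Tr}\nolimits_q(\proj_a)$. The twisted third piece is the delicate one: the formulas $\theta(u^k_a) = q^{(2\rho, \lambda_k+\lambda_a)} u^k_a$ and $\theta\big((u^i_a)^*\big) = q^{-(2\rho,\lambda_i + \lambda_a)} (u^i_a)^*$, both derived from $\theta = K_{2\rho}\triangleright \cdot \triangleleft K_{2\rho}$ and $S(K_{2\rho})^* = K_{2\rho}^{-1}$, combine to give $\theta\big((\proj_a)^k_i\big) = q^{(2\rho, \lambda_k - \lambda_i)}(\proj_a)^k_i$. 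The factor $q^{-(2\rho,\lambda_i)}$ exactly cancels the quantum trace weight, and a second application of $\proj_a(2\proj_a - 1) = \proj_a$ produces $\sum q^{(2\rho,\lambda_k)} (\proj_a)^k_j \otimes (\proj_a)^j_k$, which coincides with the first piece after relabeling.

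Summing the three contributions, the main terms cancel with net coefficient $1 - 2 + 1 = 0$, and only $1 \otimes \mathop{\rm Tr}\nolimits_q(\proj_a) = q^{(2\rho,\lambda_a)}\cdot 1 \otimes 1$ survives; this is a degenerate chain, hence zero in the normalized Hoch\-schild complex, so $[C(\proj_a)]$ is a well-defined class in $HH^\theta_2(\CqU)$. For the second assertion one additionally checks that $\theta$ preserves $\Cqflag$: for $X \in \UqkS$, conjugation by $K_{2\rho}$ sends $E_j, F_j$ (with $j \in S$) to scalar multiples of themselves, so $K_{2\rho}^{-1} X K_{2\rho} \in \UqkS$ with the same counit, whence $X \triangleright \theta(a) = \varepsilon(X)\theta(a)$ for $a \in \Cqflag$. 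Under the hypothesis that all entries $(\proj_a)^i_j$ lie in $\Cqflag$, the whole computation takes place inside $\Cqflag^{\otimes 3}$, yielding $[C(\proj_a)] \in HH^\theta_2\big(\Cqflag\big)$. The main obstacle is the twisted third term, where one must carefully track the $\theta$-action on the $*$-factors of the matrix coefficient entries and then re-index via the quasi-cyclicity of $\mathop{\rm Tr}\nolimits_q$ to align it with the first piece.
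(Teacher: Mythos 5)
Your computation is correct and is essentially the intended argument: the paper does not reprove this statement but cites \cite[Proposition~5.1]{twist-hoch}, where the proof is the same direct boundary computation you carry out --- using $(2\proj_a - 1)\proj_a = \proj_a(2\proj_a - 1) = \proj_a$, the twist $\theta\big((\matuni^a_a)^k_i\big) = q^{(2\rho,\lambda_k - \lambda_i)}(\matuni^a_a)^k_i$ cancelling the quantum-trace weight so the third term reproduces the first, and $\mathop{\rm Tr}\nolimits_q(\proj_a) = q^{(2\rho,\lambda_a)}$ ensuring the surviving term $1 \otimes \mathop{\rm Tr}\nolimits_q(\proj_a)$ is degenerate, hence zero in the normalized complex. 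Your extra check that $\theta$ preserves $\Cqflag$ (via $K_{2\rho}^{-1} E_j K_{2\rho}$, $K_{2\rho}^{-1} F_j K_{2\rho}$ being scalar multiples for $j \in S$) is also correct and is indeed needed for the second claim to make sense.
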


\begin{Remark}
Here $C(\proj_a)$ is a~modification of~the usual Chern character $\mathrm{Ch}_n\colon K_0(A) \to H^\lambda_{2 n}(A)$ given by~$\mathrm{Ch}_n(P) = \mathop{\rm Tr}\big(P^{\otimes 2 n + 1}\big)$, where $H^\lambda_\bullet$ denotes the cyclic homology of~$A$.
This can be easily modified to the twisted case by~using the quantum trace, as opposed to the usual trace.
On the other hand, the factor $2 \proj_a - 1$ and the property $\mathop{\rm Tr}\nolimits_q(\proj_a) = q^{(2 \rho, \lambda_a)}$ guarantee that we map lands in~Hoch\-schild homology, as opposed to cyclic homology.
\end{Remark}

Next we would like to check whether the class $[C(\proj_a)]$ is non-trivial.
To do this we can int\-ro\-duce an~appropriate cohomology class and to show that the corresponding pairing is non-zero.
Given $a \in I$, consider the linear functional $\eta_a\colon \CqU^{\otimes 3} \to \bbC$ given by
\[
\eta_a(a_0 \otimes a_1 \otimes a_2) := \varepsilon(a_0)\, \varepsilon(F_a \triangleright a_1) \, \varepsilon(E_a \triangleright a_2).
\]
It is easy to check that, due to the properties of~the counit, these linear functionals define (twisted) cohomology classes, as shown in~\cite[Proposition~5.5]{twist-hoch}.

\begin{Proposition}
The restriction of~$\eta_a$ to~$\Cqflag$ gives a~cohomology class
\[
[\eta_a] \in HH^2_\theta\big(\Cqflag\big).
\]
\end{Proposition}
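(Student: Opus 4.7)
The plan is to verify by direct computation that $\eta_a$ satisfies the twisted Hoch\-schild cocycle identity on $\Cqflag$. The first step is to use the elementary identity $\varepsilon(X \triangleright a) = (X \triangleright a)(1) = a(X)$ to rewrite the cochain in the compact form
\[
\eta_a(a_0 \otimes a_1 \otimes a_2) = \varepsilon(a_0) \, a_1(F_a) \, a_2(E_a).
\]
The required cocycle condition is then that for all $a_0, a_1, a_2, a_3 \in \Cqflag$,
\[
\eta_a(a_0 a_1, a_2, a_3) - \eta_a(a_0, a_1 a_2, a_3) + \eta_a(a_0, a_1, a_2 a_3) - \eta_a(\theta(a_3) a_0, a_1, a_2) = 0.
\]

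I would then expand each term using $(a a')(X) = (a \otimes a') \Delta(X)$ together with the coproducts
\[
\Delta(F_a) = F_a \otimes 1 + K_a^{-1} \otimes F_a, \qquad \Delta(E_a) = E_a \otimes K_a + 1 \otimes E_a,
\]
and compute $\theta(a_3)(1) = a_3(K_{2\rho}^2)$ in the last term. This produces six summands containing evaluations of the form $a_i(K_a^{\pm 1})$ and $a_3(K_{2\rho}^2)$ in addition to the counit and the $F_a$/$E_a$ pairings.

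The key observation is that for any $a \in \Cqflag$ and any weight $\mu$, the inclusion $\Uqt \subset \UqkS$ together with the defining invariance $K_\mu \triangleright a = a$ forces $a(K_\mu) = \varepsilon(a)$. Applying this to every $K$-factor appearing above collapses each such evaluation to a counit, reducing each of the six summands to a product of three counits times a single pairing with $F_a$ or $E_a$. Grouping them one sees that they cancel in three matching pairs.

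There is no serious obstacle here, and the argument is really a short bookkeeping exercise. The only point of conceptual interest is that it is precisely the combination of the modular twist $\theta = \mathrm{Ad}(K_{2\rho})$ with the $\UqkS$-invariance of $\Cqflag$ that lets the $K$-evaluations coming from $\Delta(E_a)$, $\Delta(F_a)$, and $\theta$ all collapse to counits; without either ingredient the asymmetries between the coproducts of $E_a$ and $F_a$ would not wash out and the cancellation would fail.
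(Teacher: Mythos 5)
Your verification is correct and matches the paper's own approach: the paper defers the proof to \cite[Proposition~5.5]{twist-hoch}, which is precisely this direct check of the twisted cocycle condition, expanding $\Delta(F_a)$ and $\Delta(E_a)$ and $\varepsilon(\theta(a_3)) = a_3\big(K_{2\rho}^2\big)$, and collapsing every evaluation $a\big(K_\mu\big)$ to $\varepsilon(a)$ via the $\UqkS$-invariance defining $\Cqflag$, after which the six summands cancel in the three pairs you identify. One cosmetic slip worth noting: after the collapse each summand is a product of \emph{two} counits with one $F_a$-pairing and one $E_a$-pairing (not three counits and a single pairing), but this does not affect the cancellation or the conclusion.
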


\begin{Remark}
Observe that this is true for the restriction of~$\eta_a$ to \emph{any} quantum flag manifold $\Cqflag$.
On the other hand, these functionals \emph{do not} give classes in~$HH^2_\theta(\CqU)$.
\end{Remark}

Finally we look at the pairing between $[\eta_a]$ and $[C(\proj_b)]$.
The result can be expressed entirely in~terms of~representation-theoretic data, as shown in~\cite[Proposition~6.11]{twist-hoch}.

\begin{Proposition}
\label{prop:pairing-classes}
The pairing between $[\eta_a]$ and $[C(\proj_b)]$ is given by
\[
\big\langle [\eta_a], [C(\proj_b)] \big\rangle = q^{(2 \rho - \alpha_a, \lambda_b)}
\big[(\alpha_a^\vee, \lambda_b)\big]_{q_a}.
\]
Here $\alpha^\vee = 2 \alpha / (\alpha, \alpha)$ is the coroot corresponding to~$\alpha$.
\end{Proposition}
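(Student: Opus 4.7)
The plan is to expand the pairing $\langle [\eta_a], [C(\proj_b)] \rangle$ directly from the definitions of both classes, converting it to a sum over index triples in which each summand is a product of three counit evaluations. Inserting the explicit form of $C(\proj_b) = \mathop{\rm Tr}\nolimits_q((2\proj_b-1) \otimes \proj_b \otimes \proj_b)$ into the defining formula for $\eta_a$, the pairing takes the shape
\begin{equation*}
\sum_{i,j,k} q^{(2\rho,\lambda_i)}\, \varepsilon\bigl((2\proj_b-1)^i_j\bigr)\, \varepsilon\bigl(F_a \triangleright (\proj_b)^j_k\bigr)\, \varepsilon\bigl(E_a \triangleright (\proj_b)^k_i\bigr),
\end{equation*}
and the strategy is to evaluate each factor using representation-theoretic data attached to $V(\lambda)$.

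The first small step is the identity $\varepsilon((\proj_b)^i_j) = \delta^i_b \delta^j_b$, which follows from $\varepsilon(u^i_b) = \delta^i_b$ and $\varepsilon(a^*) = \overline{\varepsilon(a)}$, so the first factor collapses to $2\delta^i_b \delta^j_b - \delta^i_j$. For the two remaining factors I would use the identity $\varepsilon(X \triangleright a) = a(X)$ together with the coproducts of $E_a, F_a$ and the rule $X \triangleright a^* = (S(X)^* \triangleright a)^*$ to reduce everything to the matrix elements $(v_j, F_a v_b)$ and $(v_k, E_a v_b)$, dressed with powers of $q^{(\alpha_a, \lambda_b)}$ arising from the eigenvalue of $K_a$ on $v_b$. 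After substitution, weight considerations force $(v_b, E_a v_b) = (v_b, F_a v_b) = 0$, which kills a number of cross terms; the surviving Kronecker deltas collapse the triple sum into squared norms $\|F_a v_b\|^2 = \sum_j |(v_j, F_a v_b)|^2$ and $\|E_a v_b\|^2$. Because the nonzero contributions to $\|F_a v_b\|^2$ are supported on weight $\lambda_b - \alpha_a$, the $q^{(2\rho, \lambda_i)}$ factor evaluates to $q^{(2\rho, \lambda_b)} q_a^{-2}$ there, and I expect the pairing to simplify to
\begin{equation*}
q^{(2\rho,\lambda_b)} \bigl[\, q_a^{-2} \|F_a v_b\|^2 - q^{-2(\alpha_a,\lambda_b)} \|E_a v_b\|^2 \,\bigr].
\end{equation*}

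The final step is to recognise the bracket as $q^{-(\alpha_a,\lambda_b)} [(\alpha_a^\vee,\lambda_b)]_{q_a}$. For this I would use the $U_q(\mathfrak{sl}_2)_a$-commutation relation $[E_a, F_a] = (K_a - K_a^{-1})/(q_a - q_a^{-1})$ evaluated in the state $v_b$. Applying the $*$-structure $E_a^* = K_a F_a$, $F_a^* = E_a K_a^{-1}$ gives $(v_b, E_a F_a v_b) = q^{(\alpha_a,\lambda_b) - 2 d_a} \|F_a v_b\|^2$ and $(v_b, F_a E_a v_b) = q^{-(\alpha_a,\lambda_b)} \|E_a v_b\|^2$, where $d_a = (\alpha_a, \alpha_a)/2$, so subtracting yields
\begin{equation*}
q^{(\alpha_a, \lambda_b) - 2 d_a} \|F_a v_b\|^2 - q^{-(\alpha_a, \lambda_b)} \|E_a v_b\|^2 = [(\alpha_a^\vee, \lambda_b)]_{q_a}.
\end{equation*}
Multiplying by $q^{-(\alpha_a, \lambda_b)}$ matches exactly the bracket in the previous display, and reassembling produces the claimed value $q^{(2\rho - \alpha_a, \lambda_b)} [(\alpha_a^\vee, \lambda_b)]_{q_a}$.

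The main obstacle is the signs-and-weights bookkeeping in the middle step: four terms arise from crossing the coproducts of $E_a$ and $F_a$ against $u^i_b (u^j_b)^*$, and one must verify that the various $\delta$-selections together with the vanishings $(v_b, E_a v_b) = (v_b, F_a v_b) = 0$ conspire to leave precisely the two-term combination of norms displayed above. A minor subtlety worth noting is that $v_b$ is not assumed to be a highest- or lowest-weight vector for the $U_q(\mathfrak{sl}_2)_a$-subalgebra, but this causes no trouble since the $*$-identity used in the final step holds for any weight vector.
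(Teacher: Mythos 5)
This proposition is stated in the paper without proof---it is recalled verbatim from \cite[Proposition~6.11]{twist-hoch}---so there is no in-paper argument to compare against, and your direct computation is precisely the kind of argument the cited reference carries out. Your proposal is correct: I verified that expanding the pairing with $\Delta(F_a)=F_a\otimes 1+K_a^{-1}\otimes F_a$, $\Delta(E_a)=E_a\otimes K_a+1\otimes E_a$ and $S(F_a)^*=-E_a$, $S(E_a)^*=-F_a$ makes all cross terms vanish via $(v_b,E_a v_b)=(v_b,F_a v_b)=0$ (the $2\delta^i_b\delta^j_b$ part of the first leg contributing $-2q^{-2(\alpha_a,\lambda_b)}q^{(2\rho,\lambda_b)}\|E_a v_b\|^2$ and the $-\delta^i_j$ part contributing both norm terms, with the weight shift $(2\rho,\alpha_a)=(\alpha_a,\alpha_a)$ justifying your $q_a^{-2}$), leaving exactly your intermediate expression $q^{(2\rho,\lambda_b)}\bigl(q_a^{-2}\|F_a v_b\|^2-q^{-2(\alpha_a,\lambda_b)}\|E_a v_b\|^2\bigr)$, after which your $U_q(\mathfrak{sl}_2)$ identity correctly yields $q^{(2\rho-\alpha_a,\lambda_b)}\bigl[(\alpha_a^\vee,\lambda_b)\bigr]_{q_a}$.
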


Our aim in~the next section will be to produce some non-trivial classes in~$HH^\theta_2\big(\Cqflag\big)$.
In~order to do this, we will proceed in~two steps:
\begin{enumerate}\itemsep=0pt
\item[1)] define a~projection $\proj_1 \in \mathrm{Mat}\big(\Cqflag\big)$, giving a~class $[C(\proj_1)] \in HH^\theta_2\big(\Cqflag\big)$,
\item[2)] prove that it is non-trivial by~showing that $\langle [\eta_a], [C(\proj_1)]\rangle \neq 0$ for some $a \in I$.
\end{enumerate}

\section{Non-trivial classes on quantum flag manifolds}\label{sec:quantum-classes}

In~this section we will construct some non-trivial classes in~$HH^\theta_2\big(\Cqflag\big)$.
The first step will be to construct appropriate projections $\proj_1 \in \mathrm{Mat}\big(\Cqflag\big)$.
This will make use of~a certain irreducible representation, which in~the classical case is used to give a~projective realization of~$U / K_S$. With these projections at hand and the results of~the previous section, it will be fairly straightforward to show that we get non-trivial classes.

\subsection{The projections}
\label{sec:proj-quantum}

It is well-known that the flag manifold $U / K_S$ can be realized as a~$U$-orbit in~a~projective space, see for instance~\cite[Section~3.2.8]{cap-book}.
The projective space here is $\mathbb{P}(V(\rho_S))$, where $V(\rho_S)$ is the irreducible representation with highest weight
\[
\rho_S := \sum_{i \in I \backslash S} \omega_i.
\]
In~the quantum case we proceed along these lines by~considering the corresponding irreducible representation $V(\rho_S)$ of~$\Uqu$.
Let $\{v_i\}_i$ be an~orthonormal weight basis of~$V(\rho_S)$ with respect to~a~$\Uqu$-invariant inner product.
We~write $u^i_j = \big(u^{V(\rho_S)}\big)^i_j$ for the unitary matrix coefficients.
For~notational convenience, we will assume from now on that $v_1$ is a~\emph{highest weight vector} of~$V(\rho_S)$, hence of~corresponding weight $\lambda_1 = \rho_S$.

With the notation as above, we define the elements
\begin{equation}
\label{eq:quantum-proj}
p^i_j := u^i_1 \big(u^j_1\big)^* \in \CqU.
\end{equation}
Notice that $(\proj_1)^i_j = p^i_j$, using the notation of~the previous section.
Our goal will be to show that $p^i_j \in \Cqflag$.
First we will need the following lemma.

\begin{Lemma}
\label{lem:action-Fi}
We~have $F_i v_1 = 0$ for all $i \in S$.
\end{Lemma}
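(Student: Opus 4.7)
The plan is to reduce the claim to a one-variable statement about representations of the quantum $\mathfrak{sl}_2$ subalgebra attached to the simple root $\alpha_i$, for $i \in S$. First I would unpack the definition of $\rho_S$: since $\rho_S = \sum_{j \in I \setminus S} \omega_j$ and the fundamental weights satisfy $(\omega_j, \alpha_i^\vee) = \delta_{ij}$, for any $i \in S$ one has
\[
(\rho_S, \alpha_i^\vee) = \sum_{j \in I \setminus S} (\omega_j, \alpha_i^\vee) = 0,
\]
simply because $i \notin I \setminus S$. So the highest weight vector $v_1$ has vanishing $\alpha_i$-coroot pairing for every $i \in S$.

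Next I would fix $i \in S$ and restrict the action to the Hopf subalgebra $\langle E_i, F_i, K_i^{\pm 1} \rangle \cong U_{q_i}(\mathfrak{sl}_2) \subset \Uqg$. As a highest weight vector of $V(\rho_S)$, the element $v_1$ satisfies $E_i v_1 = 0$, and the previous computation shows that its $\mathfrak{sl}_2$-weight is $(\rho_S, \alpha_i^\vee) = 0$. So $v_1$ generates a finite-dimensional $U_{q_i}(\mathfrak{sl}_2)$-submodule whose highest weight is $0$, hence the trivial one-dimensional module. In particular $F_i v_1 = 0$, which is exactly the claim.

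The argument is essentially the standard fact that in a type~$1$ finite-dimensional representation of $U_q(\mathfrak{sl}_2)$ the highest weight vector is also the lowest whenever the weight is zero. There is no real obstacle here; the only point to be careful about is invoking the correct quantum analogue of finite-dimensional $\mathfrak{sl}_2$-representation theory (so that the decomposition of $V(\rho_S)$ under the subalgebra $U_{q_i}(\mathfrak{sl}_2)$ is semisimple and classified by non-negative integer highest weights), for which one can cite \cite[Chapter~3]{klsc}.
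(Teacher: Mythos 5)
Your proof is correct, but it takes a genuinely different route from the paper. The paper argues by contradiction using the Weyl-invariance of the set of weights of the irreducible module: if $F_i v_1 \neq 0$ then $\rho_S - \alpha_i$ would be a weight of $V(\rho_S)$, and since $(\rho_S, \alpha_i) = 0$ for $i \in S$, the reflection $s_{\alpha_i}$ sends it to $s_{\alpha_i}(\rho_S) + \alpha_i = \rho_S + \alpha_i$, which cannot be a weight because $\rho_S$ is the highest weight. You instead reduce to the rank-one Hopf subalgebra $\langle E_i, F_i, K_i^{\pm 1}\rangle \cong U_{q_i}(\mathfrak{sl}_2)$ and invoke the classification of its finite-dimensional type $1$ modules: since $E_i v_1 = 0$ and $K_i v_1 = q_i^{(\rho_S, \alpha_i^\vee)} v_1 = v_1$, semisimplicity (valid as $0 < q < 1$ is not a root of unity) forces the components of $v_1$ in the $U_{q_i}(\mathfrak{sl}_2)$-isotypic decomposition to be highest weight vectors of weight $0$, hence to lie in trivial summands, so $F_i v_1 = 0$. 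Both arguments hinge on the same input $(\rho_S, \alpha_i^\vee) = 0$ and on the fact that the representation theory of $\Uqg$ mirrors the classical one; your version is arguably more self-contained in the quantum setting, since it cites only the standard $U_q(\mathfrak{sl}_2)$ theory rather than properties of the full weight system, while the paper's version is shorter once Weyl-invariance of weights is granted. (Incidentally, the paper's phrase that the Weyl group acts \emph{transitively} on the weights is an over-statement --- transitivity holds only on Weyl orbits --- but all its proof actually uses is that the weight set is stable under the Weyl group, so this does not affect correctness; your approach sidesteps the issue entirely.)
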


\begin{proof}
This works as in~the classical case, but we provide a~proof for completeness.
Suppose that $F_i v_1 \neq 0$, which implies that $F_i v_1$ has weight $\rho_S - \alpha_i$.
Recall that the Weyl group acts transitively on the weights of~an~irreducible representation.
Denoting by~$s_\alpha(\lambda) = \lambda - \frac{2 (\lambda, \alpha)}{(\alpha, \alpha)} \alpha$ the reflection of~$\lambda$ with respect to~$\alpha$, we find that $s_{\alpha_i}(\rho_S - \alpha_i) = s_{\alpha_i}(\rho_S) + \alpha_i$.
Moreover, since $(\rho_S, \alpha_i) = 0$ for~$i \in S$ by~definition of~$\rho_S$, we obtain $s_{\alpha_i}(\rho_S - \alpha_i) = \rho_S + \alpha_i$.
But this is impossible, since $\rho_S$ is the highest weight, hence we must have $F_i v_1 = 0$.
\end{proof}

We~are now ready to construct the invariant projections.

\begin{Proposition}
\label{prop:p-properties}
We~have $p^i_j \in \Cqflag$. Moreover we have
\[
\sum_k p^i_k p^k_j = p^i_j, \qquad
(p^i_j)^* = p^j_i, \qquad
\sum_i q^{(2 \rho, \lambda_i)} p^i_i = q^{(2 \rho, \rho_S)}.
\]
\end{Proposition}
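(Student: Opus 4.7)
The four claims split cleanly: invariance under $\UqkS$ is the substantive step, while the projection property, self-adjointness, and quantum trace identity follow from standard manipulations of unitary matrix coefficients. My plan is to reduce the $\UqkS$-generators to three basic inputs about $v_1$: $K_j v_1 = v_1$, $E_j v_1 = 0$, and $F_j v_1 = 0$, the last holding only for $j \in S$ (Lemma \ref{lem:action-Fi}), the first following from $(\alpha_j, \rho_S) = 0$ for $j \in S$ (a consequence of $\rho_S = \sum_{i \notin S} \omega_i$), and the second from $v_1$ being a highest weight vector.

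For invariance, I first record the identity $(u^n_1)^* = S(u^1_n)$, obtained by expanding $(u^n_1)^*(Y) = \overline{(v_n, S(Y)^* v_1)}$ and using $\Uqu$-invariance of the inner product. Next, since $\triangleright$ is an action of $\Uqg$ on $\CqU$ compatible with the coproduct, the action on $p^m_n = u^m_1 (u^n_1)^*$ expands via $\Delta(F_j) = F_j \otimes 1 + K_j^{-1} \otimes F_j$ and analogously for $E_j, K_i$. On the factor $u^m_1$, applying $F_j$ (or $E_j$) yields $(v_m, Y F_j v_1)$ (resp.\ $(v_m, Y E_j v_1)$), which vanishes by the relevant property of $v_1$. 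On the factor $S(u^1_n)$, the antihomomorphism property of $S$ together with $S(F_j) = -K_j F_j$ and $S(E_j) = -E_j K_j^{-1}$ reduces the computation to inner products of the form $(E_j v_1, \cdot)$ or $(F_j v_1, \cdot)$ after using $K_j v_1 = v_1$, and these again vanish. The $K_i$ case is a weight count: $K_i$ scales $u^m_1$ and $(u^n_1)^* = S(u^1_n)$ by reciprocal powers of $q$, so the two contributions cancel. Since the invariants form a subalgebra, checking the generators suffices.

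The projection identity is immediate from unitarity, $\sum_k (u^k_1)^* u^k_1 = 1$, so that $\sum_k p^i_k p^k_j = u^i_1 \bigl(\sum_k (u^k_1)^* u^k_1\bigr) (u^j_1)^* = p^i_j$. Self-adjointness is the tautology $\bigl(u^i_1 (u^j_1)^*\bigr)^* = u^j_1 (u^i_1)^*$. For the quantum trace, the plan is to apply $S$ to the antipode identity $\sum_k u^i_k S(u^k_j) = \delta^i_j$, producing $\sum_k S^2(u^k_j) S(u^i_k) = \delta^i_j$, and then to invoke $S^2(u^i_j) = q^{(2\rho, \lambda_i - \lambda_j)} u^i_j$ to rewrite this as $\sum_k q^{(2\rho, \lambda_k)} u^k_j S(u^i_k) = q^{(2\rho, \lambda_j)} \delta^i_j$. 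Setting $i = j = 1$, and using $\lambda_1 = \rho_S$ together with $S(u^1_k) = (u^k_1)^*$, delivers the desired formula.

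The main obstacle I anticipate is the $E_j, F_j$ invariance of $(u^n_1)^*$ for $j \in S$: this factor is not annihilated by inspection, so rewriting it as $S(u^1_n)$ and carrying $S$ through a product on the right demands some care with antihomomorphism identities. The cancellation only works because $(\alpha_j, \rho_S) = 0$ for $j \in S$ makes the stray $K_j$ factors coming from $S(E_j), S(F_j)$ act trivially on $v_1$, at which point the combination of Lemma \ref{lem:action-Fi} and the highest weight property closes the argument.
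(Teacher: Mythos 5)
Your proof is correct, and on the substantive point --- invariance of $p^i_j$ under $\UqkS$ --- it follows the paper's own route almost exactly: reduce everything to $E_j v_1 = 0$ (highest weight), $F_j v_1 = 0$ for $j \in S$ (Lemma~\ref{lem:action-Fi}), and $K_j v_1 = v_1$ from $(\rho_S, \alpha_j) = 0$, then check the generators via the coproduct and note the reciprocal $q$-powers cancel in the $K_i$ case. The paper treats the factor $\big(u^j_1\big)^*$ by evaluating $\big(X \triangleright \big(u^j_1\big)^*\big)(Y) = \overline{(v_j, S(Y)^* S(X)^* v_1)}$ and using $S(E_k)^* = -F_k$, $S(F_k)^* = -E_k$; your rewriting $\big(u^n_1\big)^* = S\big(u^1_n\big)$ and pushing adjoints through the invariant inner product is the same computation in different bookkeeping. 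Where you genuinely diverge is on the three algebraic identities: the paper disposes of them in one line by observing $p^i_j = (\matuni^1_1)^i_j$ and citing Proposition~\ref{prop:matrix-units}, which is imported from~\cite{twist-hoch}, whereas you re-derive them from first principles --- unitarity $\sum_k \big(u^k_1\big)^* u^k_1 = 1$ for idempotency, and for the quantum trace the antipode trick of applying $S$ to $\sum_k u^i_k S\big(u^k_j\big) = \delta^i_j$ together with $S^2\big(u^i_j\big) = q^{(2\rho, \lambda_i - \lambda_j)} u^i_j$ and $S\big(u^1_k\big) = \big(u^k_1\big)^*$. That computation is valid (I checked the $S^2$ eigenvalue against $S^2(a) = K_{2\rho}^{-1} \triangleright a \triangleleft K_{2\rho}$) and is essentially the hidden content of the cited proposition, so your version is more self-contained at the cost of reproving known facts. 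One small slip in justification: ``the invariants form a subalgebra'' is not quite the right reason that checking generators suffices; what you need is that, for fixed $a$, the set $\{X \in \Uqg \colon X \triangleright a = \varepsilon(X)\, a\}$ is a subalgebra of $\Uqg$, which holds because $\triangleright$ is an algebra action and $\varepsilon$ is multiplicative --- the statement that invariant \emph{elements} of $\CqU$ form a subalgebra is a different (also true) fact, which your direct coproduct computation on the product $u^i_1 \big(u^j_1\big)^*$ in any case bypasses.
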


\begin{proof}
Since $p^i_j = (\matuni^1_1)^i_j$, all claims follow from Proposition~\ref{prop:matrix-units} except for~$p^i_j \in \Cqflag$.
For~this it suffices to show that $p^i_j$ is invariant under the generators of~$\UqkS$.

Since $\big(X \triangleright u^i_1\big)(Y) = (v_i, Y X v_1)$, it is clear that $E_k \triangleright u^i_1 = 0$ for~$k \in S$ (this is true for any $k$, since $v_1$ is a~highest weight vector).
Next we have $F_k \triangleright u^i_1 = 0$ for~$k \in S$, by~Lemma~\ref{lem:action-Fi}.
On the other hand we have $K_k \triangleright u^i_1 = q^{(\rho_S, \alpha_k)} u^i_1$. Observe that $q^{(\rho_S, \alpha_k)} \neq 1$ for~$k \notin S$.

Now consider $\big(u^j_1\big)^*$. Since $\big(u^j_1\big)^*(X) = \overline{(v_j, S(X)^* v_1)}$ we have
\[
\big(X \triangleright \big(u^j_1\big)^*\big)(Y) = \overline{(v_j, S(Y X)^* v_1)} = \overline{(v_j, S(Y)^* S(X)^* v_1)}.
\]
We~easily conclude that $E_k \triangleright \big(u^j_1\big)^* = F_k \triangleright \big(u^j_1\big)^* = 0$ for~$k \in S$, as for the elements $u^i_1$.
On the other hand we have $K_i \triangleright \big(u^j_1\big)^* = q^{-(\rho_S, \alpha_k)} \big(u^j_1\big)^*$, since $S(K_k)^* = K_k^{-1}$.

Finally consider the elements $p^i_j$. Using $X \triangleright (a b) = (X_{(1)} \triangleright a) (X_{(2)} \triangleright b)$ it is clear that
\[
E_k \triangleright p^i_j = 0, \qquad
F_k \triangleright p^i_j = 0, \qquad
k \in S.
\]
On the other hand, using the results above, we have for any $k \in I$ that
\[
K_k \triangleright p^i_j = \big(K_k \triangleright u^i_1\big) \big(K_k \triangleright \big(u^j_1\big)^*\big) = p^i_j.
\]
Since $X \triangleright p^i_j = \varepsilon(X) p^i_j$ for the generators of~$\UqkS$, we have $p^i_j \in \Cqflag$.
\end{proof}

\begin{Remark}
It can be shown that the elements $p^i_j$ generate the quantum flag manifold $\Cqflag$, see~\cite[Proposition~3.2]{heko06}, but we will not need this fact.
\end{Remark}

\subsection{Non-triviality}

We~have just constructed a~projection $\proj_1$ with entries
\[
(\proj_1)^i_j = p^i_j = u^i_1 \big(u^j_1\big)^* \in \Cqflag.
\]
By Propositions~\ref{prop:hoch-classes} and~\ref{prop:p-properties}, we have a~corresponding class
\[
[C(\proj_1)] \in HH^\theta_2\big(\Cqflag\big).
\]
Finally we want to show that this class is non-trivial.

\begin{Theorem}
The class $[C(\proj_1)] \in HH^\theta_2\big(\Cqflag\big)$ is non-trivial.
\end{Theorem}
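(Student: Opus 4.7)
The plan is to apply Proposition~\ref{prop:pairing-classes} with $b = 1$ and to find an index $a \in I$ for which the pairing $\langle [\eta_a], [C(\proj_1)]\rangle$ is non-zero. Since $[\eta_a]$ defines a cohomology class on $\Cqflag$ by the Proposition preceding Proposition~\ref{prop:pairing-classes}, a non-vanishing pairing immediately implies that $[C(\proj_1)]$ is a non-trivial homology class, completing the proof.

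First I would recall that, by our normalization, $v_1$ is a highest weight vector of $V(\rho_S)$ and therefore $\lambda_1 = \rho_S$. Substituting $b = 1$ into the formula of Proposition~\ref{prop:pairing-classes} yields
\[
\big\langle [\eta_a], [C(\proj_1)] \big\rangle = q^{(2 \rho - \alpha_a, \rho_S)} \big[(\alpha_a^\vee, \rho_S)\big]_{q_a}.
\]
The prefactor $q^{(2\rho - \alpha_a, \rho_S)}$ is strictly positive since $0 < q < 1$, so the pairing vanishes if and only if the $q_a$-integer $[(\alpha_a^\vee, \rho_S)]_{q_a}$ vanishes, that is, if and only if $(\alpha_a^\vee, \rho_S) = 0$.

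Next I would compute $(\alpha_a^\vee, \rho_S)$ using the definition $\rho_S = \sum_{i \in I \backslash S} \omega_i$ and the standard duality relation $(\alpha_a^\vee, \omega_i) = \delta_{a i}$ between simple coroots and fundamental weights. This gives
\[
(\alpha_a^\vee, \rho_S) = \sum_{i \in I \backslash S} (\alpha_a^\vee, \omega_i) = \begin{cases} 1 & \text{if } a \in I \backslash S, \\ 0 & \text{if } a \in S. \end{cases}
\]
Since we are considering a genuine quantum flag manifold, we have $S \subsetneq I$, so we may pick any $a \in I \backslash S$. For such an $a$, the $q_a$-integer equals $[1]_{q_a} = 1$, and hence $\langle [\eta_a], [C(\proj_1)]\rangle = q^{(2\rho - \alpha_a, \rho_S)} \neq 0$.

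There is no substantive obstacle: once Proposition~\ref{prop:pairing-classes} and the construction of $\proj_1$ are in place, non-triviality reduces to the combinatorial observation that $\rho_S$ pairs non-trivially with at least one simple coroot, which is automatic whenever $S \neq I$. The only small point to flag is the (trivially excluded) degenerate case $S = I$, in which $\rho_S = 0$ and the flag manifold is a point.
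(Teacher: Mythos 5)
Your proof is correct and follows essentially the same route as the paper: both apply Proposition~\ref{prop:pairing-classes} with $b = 1$ (using $\lambda_1 = \rho_S$), observe that $(\alpha_a^\vee, \rho_S) = 1$ for any $a \in I \backslash S$ by the definition of $\rho_S$, and conclude that the pairing equals $q^{(2\rho - \alpha_a, \rho_S)} \neq 0$. Your additional remarks (the duality $(\alpha_a^\vee, \omega_i) = \delta_{a i}$, positivity of the prefactor, and the degenerate case $S = I$) merely make explicit what the paper leaves implicit.
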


\begin{proof}
To show that the homology class $[C(\proj_1)] \in HH^\theta_2\big(\Cqflag\big)$ is non-trivial we will show that it has non-zero pairing with a~cohomology class $[\eta_a] \in HH^2_\theta\big(\Cqflag\big)$, using Proposition~\ref{prop:pairing-classes}.
Recall that for any $a \in I$ we have
\[
\big\langle [\eta_a], [C(\proj_1)] \big\rangle = q^{(2 \rho - \alpha_a, \rho_S)}
\big[(\alpha_a^\vee, \rho_S)\big]_{q_a},
\]
where we have used the fact that $\lambda_1 = \rho_S$.
Now for any $a \in I \backslash S$ we have $(\alpha_a^\vee, \rho_S) = 1$, since by~definition $\rho_S = \sum_{i \in I \backslash S} \omega_i$.
Hence for any $a \in I \backslash S$ we obtain
\[
\big\langle [\eta_a], [C(\proj_1)] \big\rangle = q^{(2 \rho - \alpha_a, \rho_S)} \neq 0.
\]
From this we conclude that $[C(\proj_1)]$ is non-trivial.
\end{proof}

Thus we have shown that, for every quantum flag manifold $\Cqflag$, we have a~non-trivial class $[C(\proj_1)] \in HH^\theta_2\big(\Cqflag\big)$.
These classes have an~appropriate classical limit for~$q \to 1$.
Then, according to the \emph{Hoch\-schild--Kostant--Rosenberg} theorem, they will correspond to some differential two-forms on $U / K_S$.
We~will investigate this aspect in~the following.

Recall that all flag manifolds $U / K_S$ are \emph{K\"ahler manifolds}.
In~particular, they admit two-forms with some special properties, the \emph{K\"ahler forms}.
In~the next section we will construct some K\"ahler forms on $U / K_S$ with the goal of~comparing them with the classes $[C(\proj_1)]$.

\section{K\"ahler forms on (classical) flag manifolds}
\label{sec:kahler-forms}

In~this section we will provide a~construction of~some K\"ahler forms on the flag manifolds $U / K_S$.
While there are many possible approaches, our aim is to proceed in~a~way that is well-suited for comparison with the quantum classes from the previous section.
The construction will use projections analogous to those used in~the quantum case.

\subsection{Notation}

Before getting into the construction, let us quickly explain the notation we will employ.
This will parallel what we have already used in~the quantum case.

Recall that the Lie groups $G$ and $U$, as well as the Lie algebras $\lieg$ and $\lieu$, all act in~a~compatible way in~a~given finite-dimensional representation $V$.
The representation of~$G$ will be \emph{holomorphic}, while the representation of~$U$ will be \emph{unitary} with respect to an~appropriate inner product.
We~will mainly consider the algebra of~\emph{matrix coefficients} $\CU \subset C^\infty(U, \bbC)$.
This algebra is spanned by~$c^V_{f, v}\colon U \to \bbC$ (for finite-dimensional representations) given by
\[
c^V_{f, v}(g) := f(g v), \qquad
f \in V^*, \quad v \in V, \quad g \in U.
\]
We~will occasionally consider the matrix coefficients $c^V_{f, v}$ as functions on $G$ according to the same formula.
In~the same way we can make sense of~$c^V_{f, v}(X)$ for~$X$ in~$\lieg$ or $\lieu$.

Now let $\{v_i\}_i$ be an~orthonormal weight basis with respect to the given inner product on~$V$.
Let us also denote by~$\{f^i\}_i$ the dual basis of~$V^*$. Then we have $c^V_{f^i, v_j}(g) = (v_i, g v_j)$.
Corresponding to this choice we will employ the notation $u^i_j := c^V_{f^i, v_j}$, that is
\[
u^i_j(g) = (v_i, g v_j).
\]
We~will omit the index $V$, as the representation $V$ will be fixed in~the following.
Recall that, given a~function $f\colon U \to \bbC$, its \emph{conjugate} $\overline{f}$ is defined by~$\overline{f}(g) := \overline{f(g)}$.
Since the matrix with entries $u^i_j(g)$ is unitary for every $g \in U$, we obtain the identity
\[
\overline{u^i_j}(g) = u^j_i\big(g^{-1}\big).
\]

Finally let us consider the action of~the Lie algebra $\lieg$ on $c^V_{f, v}$.
Considering $\lieg$ as derivations at $1 \in G$, it turns out that $X\big(c^V_{f, v}\big) = f(X v)$, where $\lieg$ acts on $V$ by~its representation.

\subsection{The projections}

In~this subsection we will construct a~projection $P$ for each flag manifold $U / K_S$, in~full analogy with the construction given in~Section~\ref{sec:proj-quantum}.

Corresponding to the subset $S \subset I$, we consider the dominant weight
\[
\rho_S = \sum_{i \in I \backslash S} \omega_i.
\]
We~have an~irreducible representation $V(\rho_S)$ of~highest weight $\rho_S$.
Let $\{v_i\}_i$ be an~orthonormal weight basis as above.
We~assume that $v_1$ is a~highest weight vector (of weight $\rho_S$).

As~we have already mentioned, the action of~$P_S$ preserves the line $\bbC v_1$, see for instance \mbox{\cite[Section~3.2.8]{cap-book}}.
This defines a~character $\charS\colon P_S \to \bbC^\times$ by
\[
g v_1 = \charS(g) v_1, \qquad g \in P_S.
\]
This restricts to a~character of~$K_S = P_S \cap U$, which we denote by~the same symbol.

\begin{Remark}
Recall that a~linear functional $\lambda\, {\in}\, \lieh^*$ is called \emph{analytically integral} (see~\cite[Sec\-tion~IV.7]{knapp}) if there is a~(multiplicative) character $\xi_\lambda\colon T \to \bbC^\times$ such that
\[
\xi_\lambda(\exp h) = e^{\lambda(h)}, \qquad \forall\, h \in \lieh \cap \lieu.
\]
Our notation $\charS$ for the character corresponding to~$V(\rho_S)$ is consistent with this one.
\end{Remark}

Now using the unitary matrix coefficients $u^i_j$ of~$V(\rho_S)$ we define
\begin{equation}
\label{eq:class-proj}
p^i_j := u^i_1 \overline{u^j_1} \in \CU \subset C^\infty(U, \bbC).
\end{equation}
We~will now derive some properties satisfied by~the functions $p^i_j$.

\begin{Lemma}
\label{lem:proj-mat}
We~have $p^i_j \in C^\infty(U / K_S, \bbC)$. Moreover we have the identities
\[
\sum_k p^i_k p^k_j = p^i_j, \qquad
\overline{p^i_j} = p^j_i, \qquad
\sum_i p^i_i = 1.
\]
\end{Lemma}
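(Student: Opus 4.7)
The plan is to argue in complete analogy with the quantum case (Proposition~\ref{prop:p-properties}), with the three algebraic identities following from unitarity of the matrix $\big(u^i_j(g)\big)$ and the $K_S$-invariance following from the character property of $v_1$.

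First I would handle the three identities. For fixed $g \in U$ the matrix $\big(u^i_j(g)\big)_{i,j} = \big((v_i, g v_j)\big)_{i,j}$ is unitary, because $\{v_i\}$ is an orthonormal basis and $g$ acts unitarily. In particular the column $\big(u^k_1(g)\big)_k$ consists of the coefficients of $g v_1$ in the orthonormal basis, so $\sum_k |u^k_1(g)|^2 = \|g v_1\|^2 = 1$. This immediately gives
\[
\sum_k p^i_k\, p^k_j \;=\; u^i_1 \overline{u^j_1} \sum_k u^k_1 \overline{u^k_1} \;=\; u^i_1 \overline{u^j_1} \;=\; p^i_j,
\]
as well as $\sum_i p^i_i = \sum_i |u^i_1|^2 = 1$, and the conjugation identity $\overline{p^i_j} = \overline{u^i_1}\, u^j_1 = p^j_i$ is immediate.

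The main content is thus to show $p^i_j \in C^\infty(U/K_S, \bbC)$, which amounts to checking right $K_S$-invariance $p^i_j(g k) = p^i_j(g)$ for all $k \in K_S$. For this I would use that $v_1$ spans a $P_S$-stable line and hence carries the character $\charS\colon P_S \to \bbC^\times$ introduced just above, which when restricted to the compact subgroup $K_S$ takes values in the unit circle (so $\overline{\charS(k)} = \charS(k)^{-1}$). Then
\[
u^i_1(g k) = (v_i, g k v_1) = \charS(k)\, u^i_1(g),
\qquad
\overline{u^j_1}(g k) = \overline{\charS(k)}\, \overline{u^j_1}(g),
\]
and multiplying gives $p^i_j(g k) = |\charS(k)|^2\, p^i_j(g) = p^i_j(g)$.

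There is no real obstacle here; the only subtlety is that the character $\charS$ of $P_S$ is not unitary in general, but it is unitary on $K_S$ because $K_S$ is compact, which is precisely what makes the two characters cancel. This mirrors exactly the quantum argument, where the analogous cancellation was given by $K_k \triangleright u^i_1 = q^{(\rho_S,\alpha_k)} u^i_1$ against $K_k \triangleright \big(u^j_1\big)^* = q^{-(\rho_S,\alpha_k)} \big(u^j_1\big)^*$, and the vanishing of the $E_k, F_k$ actions for $k \in S$ corresponds classically to $\liek_S$ acting trivially on the line $\bbC v_1$.
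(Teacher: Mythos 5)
Your proof is correct and follows essentially the same route as the paper: the invariance $p^i_j(gk) = p^i_j(g)$ via the character identities $u^i_1(gk) = \charS(k)\,u^i_1(g)$ and $\overline{u^j_1}(gk) = \overline{\charS(k)}\,\overline{u^j_1}(g)$ with the cancellation $|\charS(k)|^2 = 1$, and the three algebraic identities from unitarity of the matrix $\big(u^i_j(g)\big)$ (which the paper dismisses as ``easy to check'' but you spell out). The only cosmetic difference is that you deduce $|\charS(k)| = 1$ from compactness of $K_S$, whereas the paper gets it directly from unitarity of the $U$-representation; both justifications are valid.
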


\begin{proof}
We~need to show that $p^i_j(g k) = p^i_j(g)$ for all $g \in U$ and $k \in K_S$. We~compute
\[
u^i_1(g k) = (v_i, g k v_1) = \charS(k) u^i_1(g).
\]
On the other hand, using $\overline{u^j_1}(g) = {u^1_j}(g^{-1})$ we compute
\[
\overline{u^j_1}(g k) = {u^1_j}\big(k^{-1} g^{-1}\big) = \big(v_1, k^{-1} g^{-1} v_j\big) = \big(k v_1, g^{-1} v_j\big) = \overline{\charS(k)} \overline{u^j_1}(g).
\]
Here we have used that the representation is unitary. Therefore
\[
p^i_j(g k) = u^i_1(g k) \overline{u^j_1}(g k) = |\charS(k)|^2 u^i_1(g) \overline{u^j_1}(g) = p^i_j(g).
\]
The other properties are easy to check.
\end{proof}

We~denote by~$P$ the matrix with entries $p^i_j$.
By the previous lemma, this is an~orthogonal projection of~rank $1$ with entries in~$C^\infty(U / K_S, \bbC)$.

\subsection{Line bundles}

In~this subsection we will interpret the functions $u^i_1$ as sections of~a line bundle over $G / P_S$ $\cong U / K_S$.
The material here will be used only tangentially in~the following, but it gives an~inte\-res\-ting geometric perspective, as well as connecting the construction we will use with other ways to obtain K\"ahler forms on flag manifolds.

Recall that the Lie group $G$ can be considered as a~\emph{principal $P_S$-bundle} over the flag manifold~$G / P_S$ (similarly for the compact description $U / K_S$).
Given a~representation of~$G$ on a~vector space $V$, we can form the \emph{associated vector bundle} $G \times_{P_S} V$.
The points of~this bundle are the equivalence classes of~$G \times V$ with respect to the relation
\[
(g, v) \sim \big(g p, p^{-1} v\big), \qquad
g \in G, \quad p \in P_S, \quad v \in V.
\]
This is a~\emph{holomorphic} vector bundle if the given representation is holomorphic.
The \emph{sections} of~this bundle can be identified with the functions $f\colon G \to V$ such that
\[
f(g p) = p^{-1} f(g), \qquad g \in G, \quad p \in P_S.
\]

Now let us consider the holomorphic line bundle
\[
L_{-\rho_S} := G \times_{P_S} \bbC,
\]
where $P_S$ acts on $\bbC$ by~$p \cdot z = \charS(p)^{-1} z$.
Observe that this is a~holomorphic representation, since the character $\xi_S$ comes from the holomorphic representation of~$G$ on $V(\rho_S)$.

\begin{Lemma}
The functions $u^i_1\colon G \to \bbC$ are holomorphic sections of~$L_{-\rho_S}$.
\end{Lemma}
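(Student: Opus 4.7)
The plan is to verify two things: that the functions $u^i_1$ satisfy the equivariance law characterizing sections of $L_{-\rho_S}$, and that they are holomorphic when viewed on $G$.

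First I would unpack the equivariance condition. Since $P_S$ acts on $\bbC$ by $p \cdot z = \charS(p)^{-1} z$, the sections of $L_{-\rho_S}$ correspond to functions $f \colon G \to \bbC$ satisfying $f(g p) = p^{-1} \cdot f(g) = \charS(p) f(g)$ for all $g \in G$, $p \in P_S$. Then, using that $v_1$ is a weight vector for the character $\charS$, namely $p v_1 = \charS(p) v_1$, I compute
\[
u^i_1(g p) = f^i(g p v_1) = \charS(p) f^i(g v_1) = \charS(p) u^i_1(g).
\]
This is exactly the required transformation law, so each $u^i_1$ defines a section of $L_{-\rho_S}$.

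Next I would argue holomorphicity. The representation of the complex Lie group $G$ on $V(\rho_S)$ is holomorphic, so the orbit map $g \mapsto g v_1$ is a holomorphic map $G \to V(\rho_S)$. Composing with the complex-linear functional $f^i \in V(\rho_S)^*$ gives the holomorphic function $u^i_1 \colon G \to \bbC$. Under the standard identification of sections of $L_{-\rho_S}$ with $P_S$-equivariant functions $G \to \bbC$, holomorphicity of the section is equivalent to holomorphicity of the corresponding function, so $u^i_1$ is a holomorphic section.

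There is no real obstacle here; the only subtle point is being careful with the sign conventions so that the action $p \cdot z = \charS(p)^{-1} z$ matches the transformation rule $u^i_1(g p) = \charS(p) u^i_1(g)$ (rather than its inverse), which is why the line bundle is labelled $L_{-\rho_S}$. Once the bookkeeping is straight, the statement follows immediately from the fact that $v_1$ is a $P_S$-eigenvector with eigencharacter $\charS$, together with the holomorphicity of the representation $V(\rho_S)$ of $G$.
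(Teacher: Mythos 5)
Your proof is correct and follows essentially the same route as the paper: the equivariance law comes from $p\,v_1 = \charS(p)\,v_1$, and holomorphicity from the holomorphic representation of $G$ on $V(\rho_S)$ composed with the linear functional $f^i$. The only difference is that you spell out the sign bookkeeping in the $P_S$-action on $\bbC$ explicitly, which the paper leaves implicit.
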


\begin{proof}
It is clear that they are holomorphic functions, since $u^i_1(g) = (v_i, g v_1)$ for~$g \in G$ and the representation of~$G$ on $V(\rho_S)$ is holomorphic.
To show that they are sections of~$L_{-\rho_S}$ we~observe that $p v_1 = \charS(p) v_1$ implies $u^i_1(g p) = \charS(p) u^i_1(g)$ for~$p \in P_S$.
\end{proof}

\begin{Remark}
The minus sign in~the definition of~$L_{-\rho_S}$ (that is, using the character $\charS^{-1}$ instead of~$\charS$) is due to the fact that we take the \emph{positive} Borel subgroup, as opposed to the negative one, which is the more common choice when stating the Borel--Weil theorem.
For a~formulation using this convention see for instance~\cite[Theorem~7.58]{sepanski}.
\end{Remark}

It is possible to proceed along these lines to obtain a~K\"ahler form on $U / K_S$, as we will now sketch.
Equipping the line bundle $L_{-\rho_S}$ with a~connection, its curvature gives a~$(1, 1)$-form $\eta$ (a representative of~the \emph{first Chern class} of~$U / K_S$).
Then $\eta$ is K\"ahler if the line bundle $L_{-\rho_S}$ is \emph{positive}, which in~turn is equivalent to~$L_{-\rho_S}$ being \emph{ample} by~Kodaira's embedding theorem.
But it is known that $L_{-\rho_S}$ is ample by~results of~Borel--Weil, see for instance~\cite[Theorem~6.5]{snow} (keeping in~mind the opposite convention for the Borel subgroup).

However we will not proceed this way, since this description is not particularly well-suited for comparison with the quantum setting.
Instead, we will define the candidate K\"ahler form in~terms of~the projection $P$ introduced before, using the Chern character.

\subsection{Differential forms}

Let $P$ be the projection with entries $p^i_j \in C^\infty(U / K_S, \bbC)$ from \eqref{eq:class-proj}.
Corresponding to this projection, we define a~two-form on $U / K_S$ by
\[
\kafot := \mathop{\rm Tr}(P \cdot \diff P \wedge \diff P) = \sum_{i, j, k} p^i_j \diff p^j_k \wedge \diff p^k_i.
\]
Here and in~the following we will adopt some obvious matrix-type notation.
Our aim will be to show that $\tilde{\omega}$ is, up to a~constant, a~K\"ahler form on $U / K_S$.

Before getting into that, let us motivate this choice from a~suitably non-commutative point of~view.
By Lemma~\ref{lem:proj-mat} the matrix $P$ with entries $p^i_j \in C^\infty(U / K_S, \bbC)$ is a~projection of~rank one.
Hence we have a~projective $C^\infty(U / K_S, \bbC)$-module of~rank one which, according to the \emph{Serre--Swan} theorem, corresponds to a~complex line bundle over $U / K_S$.
Moreover this bundle admits a~Hermitian structure, due to the fact that $P$ is orthogonal.
More importantly, this line bundle admits a~natural connection defined in~terms of~the projection $P$, namely the \emph{Levi-Civita} one.
Its curvature can be computed using the \emph{Chern character} and coincides with the two-form~$\tilde{\omega}$ defined above, up to a~factor.
For more on this point of~view, see for instance~\cite[Chapter~1]{karoubi} and~\cite[Chapter~8]{loday}.

We~will now show some basic properties of~$\kafot$.
We~remark that the fact that it is closed is a~general result of~Chern--Weil theory, but we give a~short proof for completeness.

\begin{Lemma}
The two-form $\kafot$ on $U / K_S$ satisfies the properties:
\begin{enumerate}\itemsep=0pt
\item[$1)$] it is closed,
\item[$2)$] it is left $U$-invariant.
\end{enumerate}
\end{Lemma}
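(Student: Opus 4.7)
The plan is to treat both properties as purely algebraic manipulations with matrix-valued forms on $U/K_S$, exploiting two structural facts about $P$: the projection identity $P^2 = P$ for closedness, and the equivariance $L_h^* P = U(h) \cdot P \cdot U(h)^{-1}$ under left translation for $U$-invariance. No further geometric input should be required.

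For closedness, I would first apply $\diff$ to $\kafot = \mathop{\rm Tr}(P \cdot \diff P \wedge \diff P)$ using the graded Leibniz rule and $\diff^2 P = 0$; the two boundary terms involving $\diff^2 P$ drop out and the remaining term gives
\[
\diff \kafot = \mathop{\rm Tr}(\diff P \wedge \diff P \wedge \diff P).
\]
The key algebraic input is then obtained by differentiating $P^2 = P$ to get $P \cdot \diff P + \diff P \cdot P = \diff P$, and sandwiching by $P$ to deduce $P \cdot \diff P \cdot P = 0$ (and symmetrically $(1-P) \cdot \diff P \cdot (1-P) = 0$). Setting $A := P \cdot \diff P \cdot (1-P)$ and $B := (1-P) \cdot \diff P \cdot P$, we have $\diff P = A + B$ with $A \wedge A = B \wedge B = 0$, so only the alternating words survive and $(\diff P)^{\wedge 3} = A \wedge B \wedge A + B \wedge A \wedge B$. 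Each of these summands is of the form $P \cdot (\cdots) \cdot (1-P)$ or $(1-P) \cdot (\cdots) \cdot P$, so cyclicity of the matrix trace together with $P(1-P) = 0$ forces $\mathop{\rm Tr}((\diff P)^{\wedge 3}) = 0$, i.e.\ $\diff \kafot = 0$.

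For left $U$-invariance, I would use the multiplication rule for matrix coefficients. Expanding $u^i_1(hg) = (v_i, hgv_1) = \sum_k u^i_k(h) u^k_1(g)$, and taking the complex conjugate for $\overline{u^j_1}(hg)$, unitarity of the representation yields $L_h^* P = U(h) \cdot P \cdot U(h)^{-1}$, where $U(h)$ is the constant unitary matrix with entries $u^i_j(h)$. Since $U(h)$ does not depend on the base point, it commutes with $\diff$, so $\diff(L_h^* P) = U(h) \cdot \diff P \cdot U(h)^{-1}$. Substituting into the definition of $\kafot$ and applying cyclicity of the matrix trace absorbs the conjugating factors and yields $L_h^* \kafot = \kafot$.

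I do not anticipate a genuine obstacle: closedness is forced by the off-diagonal identity $P \cdot \diff P \cdot P = 0$ together with a short combinatorial check on $(\diff P)^{\wedge 3}$, while invariance follows from cyclicity once one observes that the transformation matrix $U(h)$ is a constant on $U/K_S$. The only mild subtlety is that the intermediate matrix identities are carried out on $U$ (or $G$) and only then pushed down to the quotient; this is legitimate because the entries $p^i_j$ are themselves $K_S$-invariant by Lemma~\ref{lem:proj-mat}.
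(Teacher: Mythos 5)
Your proof is correct, and for both parts it lands close to the paper's own argument, with one genuine variation in part (1). For closedness, the paper computes $\diff \kafot = \mathop{\rm Tr}(\diff P \wedge \diff P \wedge \diff P)$ exactly as you do, but then introduces the involution $J = 2P - 1$ with $J^2 = 1$, differentiates to get the anticommutation $J \,\diff P = - \diff P\, J$, and concludes by a two-line sign argument: inserting $J^2$ and cycling one $J$ through the three factors gives $\diff\kafot = -\diff\kafot$. Your route instead extracts the identities $P\cdot\diff P\cdot P = 0$ and $(1-P)\cdot \diff P\cdot(1-P) = 0$ and decomposes $\diff P = A + B$ into its off-diagonal blocks, killing each surviving word $A\wedge B\wedge A$, $B\wedge A\wedge B$ under the trace. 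The two arguments are equivalent in substance --- $J$ anticommuting with $\diff P$ is precisely the statement that $\diff P$ is off-diagonal for the splitting $\operatorname{im} P \oplus \ker P$ --- but the paper's $J$-trick is shorter, while your block decomposition exposes the mechanism more explicitly and shows directly that $\mathop{\rm Tr}\big((\diff P)^{\wedge(2k+1)}\big) = 0$ for all odd powers. For invariance, your argument is the paper's argument in matrix form: the paper computes $L_g^* p^i_j = \sum_{k,l}\pi(g)^i_k\,\pi\big(g^{-1}\big)^l_j\, p^k_l$ entrywise and verifies that the cyclic tensor $\sum_{i,j,k} p^i_j\otimes p^j_k\otimes p^k_i$ is preserved, whereas you package the same computation as $L_g^* P = \pi(g)\, P\, \pi(g)^{-1}$ and absorb the constant conjugating matrices by cyclicity of the trace; both rest on pullback commuting with $\diff$ and $\wedge$. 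One point worth making explicit in your write-up: cyclicity of the trace on matrix-valued forms carries a Koszul sign $\mathop{\rm Tr}(\alpha\wedge\beta) = (-1)^{|\alpha||\beta|}\mathop{\rm Tr}(\beta\wedge\alpha)$ in general; your uses are all legitimate because every factor you cycle ($P$, $1-P$, $\pi(g)$) is a matrix of $0$-forms, but this should be said rather than left implicit.
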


\begin{proof}
(1) The exterior derivative of~$\kafot$ is the $3$-form given by
\[
\diff \kafot = \sum_{i, j, k} \diff p^i_j \wedge \diff p^j_k \wedge \diff p^k_i = \mathop{\rm Tr}(\diff P \wedge \diff P \wedge \diff P).
\]
Now consider the element $J = 2 P - 1$, which satisfies $J^2 = 1$.
Applying $\diff$ to this identity we get $J \diff P = - \diff P J$.
Using these properties of~$J$, we can observe that
\[
\diff \kafot = \mathop{\rm Tr}\big(J^2 \diff P \wedge \diff P \wedge \diff P\big) = - \mathop{\rm Tr}(J \diff P \wedge \diff P \wedge \diff P J)
 = - \mathop{\rm Tr}(\diff P \wedge \diff P \wedge \diff P) = 0.
\]

(2) The left translation $L_g\colon U \to U$ given by~$L_g h = g h$ induces a~map $L_g\colon U / K_S \to U / K_S$, denoted by~the same symbol.
A form $\omega$ on $U / K_S$ is left $U$-invariant if $L_g^* \omega = \omega$ for every $g \in U$.
In~other words, for any $g \in U$ we must have $L_g^* \omega_{g h} = \omega_h$ for every $h \in U / K_S$.
For the matrix coefficients $u^i_j$, considered as functions on $U$, we have
\[
L_g^* u^i_j = \sum_k \pi(g)^i_k u^k_j, \qquad
L_g^* \overline{u^i_j} = \sum_k \pi\big(g^{-1}\big)^k_i \overline{u^k_j}.
\]
Here $\pi$ denotes the representation of~$U$ on $V(\rho_S)$.
From these identities we immediately obtain that the pullback of~the functions $p^i_j$ is given by
\[
L_g^* p^i_j = \sum_{k, l} \pi(g)^i_k \pi\big(g^{-1}\big)^l_j p^k_l.
\]
Using this fact, it is easy to check that we have the identity
\[
\sum_{i, j, k} L_g^* p^i_j \otimes L_g^* p^j_k \otimes L_g^* p^k_i = \sum_{i, j, k} p^i_j \otimes p^j_k \otimes p^k_i.
\]
Since pullbacks are compatible with the wedge product and commute with the exterior derivative, we conclude that $L_g^* \kafot_{g h} = \kafot_h$ and hence $\kafot$ is left $U$-invariant.
\end{proof}

\subsection{Complex decomposition}

In~this section we explore the consequences of~the $u^i_1$ being holomorphic sections of~a line bundle over $U / K_S$.
We~begin with a~simple lemma.

\begin{Lemma}
\label{lem:diff-ids}
We~have the identities
\begin{gather*}
P \cdot \del P = 0,
\qquad \del P \cdot P = \del P,
\qquad
 P \cdot \delbar P = \delbar P, \qquad
 \delbar P \cdot P = 0.
\end{gather*}
\end{Lemma}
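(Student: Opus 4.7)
My plan is to reduce all four identities to the single statement $P \cdot \del P = 0$. Once that is established, $\del P \cdot P = \del P$ drops out by applying $\del$ to the projection identity $P^2 = P$ (from Lemma~\ref{lem:proj-mat}), while the two $\delbar$-identities follow by taking complex conjugates of the $\del$-identities, using $\overline{p^i_j} = p^j_i$ and the reversal of matrix multiplication under conjugate transpose.

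To prove $P \cdot \del P = 0$, I would exploit the factorization $p^i_j = u^i_1 \overline{u^j_1}$ together with the preceding discussion that the $u^i_1$ are holomorphic sections of $L_{-\weiS}$ over $U / K_S \cong G / P_S$. Concretely, I would pass to a local holomorphic frame $\sigma$ of $L_{-\weiS}$, writing $u^i_1 = f^i \sigma$ for locally defined holomorphic functions $f^i$ and $h := (\sigma, \sigma)$ for the local expression of the Hermitian metric on $L_{-\weiS}$. Then $p^i_j = f^i \overline{f^j}\, h$ on this chart, and the unitarity identity $\sum_k |u^k_1|^2 = 1$ from Lemma~\ref{lem:proj-mat} translates to the normalization $\sum_k |f^k|^2 = h^{-1}$.

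The key computational step is then short. Because $\del \overline{f^j} = 0$, the Leibniz rule gives $\del p^i_j = \overline{f^j}\, \del(f^i h)$, and a routine expansion of $\sum_k p^i_k\, \del p^k_j$ produces, after using $\sum_k |f^k|^2 = h^{-1}$, a single bracket of the form $h^2 \sum_k \overline{f^k}\, \del f^k + \del h$. Applying $\del$ to $\sum_k |f^k|^2 = h^{-1}$ and again invoking $\del \overline{f^k} = 0$ yields $\sum_k \overline{f^k}\, \del f^k = -h^{-2}\, \del h$, which makes the bracket vanish.

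The main obstacle is conceptual rather than computational: the $u^i_1$ are sections of a nontrivial line bundle rather than functions on $U / K_S$, so the identity $\delbar u^i_1 = 0$ cannot be applied to them directly, nor can one naively split $d$ as $\del + \delbar$ on the individual factors $u^i_1$, $\overline{u^j_1}$. The local-frame description above (or, equivalently, working on $G / P_S$ with the globally defined ratio $u^i_1 \overline{u^j_1} / \sum_k |u^k_1|^2$, whose numerator and denominator \emph{are} genuine holomorphic/antiholomorphic expressions on $G$) circumvents this subtlety, after which the calculation essentially recovers the standard derivation of the Fubini--Study form on $\mathbb{P}(V(\weiS))$.
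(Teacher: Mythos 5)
Your proof is correct, and it rests on exactly the same two ingredients as the paper's: holomorphicity of the $u^i_1$ and the normalization $\sum_k u^k_1 \overline{u^k_1} = 1$ from Lemma~\ref{lem:proj-mat}. The execution differs, however. The paper's proof is a one-line symbolic manipulation: it treats $u^i_1$ and $\overline{u^j_1}$ directly as holomorphic/antiholomorphic quantities, writes $\del p^i_j = \big(\del u^i_1\big)\overline{u^j_1}$, inserts $\sum_k \overline{u^k_1} u^k_1 = 1$ inside the product to read off $\del P = \del P \cdot P$, and notes the other identities follow similarly (each admits the same one-line trick, e.g., $\sum_k p^i_k \del p^k_j = u^i_1 \overline{u^j_1} \sum_k \overline{u^k_1}\,\del u^k_1 = u^i_1 \overline{u^j_1}\,\del(1) = 0$). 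You instead prove only $P \cdot \del P = 0$, by an honest computation in a local holomorphic frame of $L_{-\rho_S}$, and then derive the other three identities formally from $\del\big(P^2\big) = \del P$ and entrywise conjugation via $\overline{p^i_j} = p^j_i$ --- a clean reduction that the paper leaves implicit. Your cautionary remark is also well taken, and is the genuine content of the difference: strictly speaking $u^i_1$ is a function on $G$ (a section over $G/P_S$) rather than a function on $U/K_S$, and $\sum_k |u^k_1|^2 = 1$ holds on $U$ but not on $G$, so the paper's computation is informal as written; your frame calculation --- equivalently, working with the $P_S$-invariant ratio $u^i_1 \overline{u^j_1} / \sum_k |u^k_1|^2$, which \emph{is} the pullback of $p^i_j$ to $G$ --- is a correct way to make it rigorous, at the cost of a longer, Fubini--Study-type cancellation (your bracket identity $\sum_k \overline{f^k}\,\del f^k = -h^{-2}\,\del h$ checks out). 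One small point you should make explicit: writing $p^i_j = f^i \overline{f^j}\, h$ uses that the invariant Hermitian metric on $L_{-\rho_S}$ computes $p^i_j$ as the pointwise pairing of the sections $u^i_1$, $u^j_1$; alternatively, and more economically, you can simply \emph{define} $h := \big(\sum_k |f^k|^2\big)^{-1}$, since the ratio description already gives $p^i_j = f^i \overline{f^j} / \sum_k |f^k|^2$ in any local holomorphic trivialization, after which your computation runs unchanged.
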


\begin{proof}
These can be easily derived from Lemma~\ref{lem:proj-mat} together with $\delbar(u^i_1) = 0$ and $\del\big(\overline{u^j_1}\big) = 0$, where the last two identities follow from $u^i_1$ being holomorphic. For instance we have
\[
\del p^i_j = \big(\del u^i_1\big) \overline{u^j_1} = \sum_k \big(\del u^i_1\big) \overline{u^k_1} u^k_1 \overline{u^j_1}
= \sum_k \big(\del p^i_k\big) p^k_j. \tag*{\qed}
\]
\renewcommand{\qed}{}
\end{proof}

\begin{Remark}
There is an~exact analogue of~these identities for the quantum irreducible flag manifolds in~terms of~the Heckenberger--Kolb calculus, see~\cite[Lemma 5.2]{mat-kahler}.
\end{Remark}

We~will now rescale $\kafot$ by~setting
\begin{equation}
\label{eq:kahler-form}
\kafo := - \iu \kafot = - \iu \mathop{\rm Tr}(P \cdot \diff P \wedge \diff P).
\end{equation}
We~recall that $\iu \in \bbC$ denotes the imaginary unit. The main reason for this rescaling is to make $\kafo$ into a~\emph{real} form (and also positive definite, as we will see later on).

\begin{Lemma}
We~have that $\kafo$ is a~real $(1, 1)$-form. Moreover
\[
\kafo = \iu \mathop{\rm Tr}(\del P \wedge \delbar P) = \iu \sum_{i, j} \del p^i_j \wedge \delbar p^j_i.
\]
\end{Lemma}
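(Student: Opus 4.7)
The plan is to expand everything in terms of the Dolbeault decomposition $d = \partial + \bar\partial$ and then exploit Lemma~\ref{lem:diff-ids} to kill off the pieces that are not of bidegree $(1,1)$. Writing $\diff P = \del P + \delbar P$ and substituting into $\tilde{\kafo} = \mathop{\rm Tr}(P \cdot \diff P \wedge \diff P)$ gives four terms. The terms containing $P \cdot \del P$ vanish because $P \cdot \del P = 0$, and the term $\mathop{\rm Tr}(P \cdot \delbar P \wedge \delbar P) = \mathop{\rm Tr}(\delbar P \wedge \delbar P)$ vanishes as well: from $\delbar P \cdot P = 0$ and Leibniz we get $0 = \delbar(\delbar P \cdot P) = -\delbar P \wedge \delbar P$, so this matrix-valued $2$-form is actually zero. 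Hence only $\mathop{\rm Tr}(P \cdot \delbar P \wedge \del P) = \mathop{\rm Tr}(\delbar P \wedge \del P)$ survives, again using $P \cdot \delbar P = \delbar P$.

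Next I use the graded cyclicity of the matrix trace on $1$-forms, $\mathop{\rm Tr}(\alpha \wedge \beta) = -\mathop{\rm Tr}(\beta \wedge \alpha)$, to rewrite $\mathop{\rm Tr}(\delbar P \wedge \del P) = -\mathop{\rm Tr}(\del P \wedge \delbar P)$. Multiplying by $-\iu$ yields the claimed formula
\[
\kafo = \iu \mathop{\rm Tr}(\del P \wedge \delbar P) = \iu \sum_{i, j} \del p^i_j \wedge \delbar p^j_i.
\]
This is manifestly of bidegree $(1,1)$, since each summand is a wedge of a $(1,0)$-form with a $(0,1)$-form.

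For reality, I use that $\overline{p^i_j} = p^j_i$ by Lemma~\ref{lem:proj-mat} together with the fact that complex conjugation intertwines $\del$ and $\delbar$, i.e. $\overline{\del \alpha} = \delbar \overline{\alpha}$. Then
\[
\overline{\kafo} = -\iu \sum_{i, j} \delbar p^j_i \wedge \del p^i_j = \iu \sum_{i, j} \del p^i_j \wedge \delbar p^j_i = \kafo,
\]
where the middle equality swaps two $1$-forms in the wedge. No step is truly difficult; the only point requiring a modicum of care is the sign bookkeeping for matrix-valued forms under the trace, and the small observation that $\delbar P \wedge \delbar P = 0$ (as a matrix, not merely after taking trace), which is what makes the bidegree collapse cleanly.
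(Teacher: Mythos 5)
Your proof is correct and follows essentially the same route as the paper: expand $\diff P = \del P + \delbar P$, use Lemma~\ref{lem:diff-ids} to reduce $\kafot$ to $\mathop{\rm Tr}(\delbar P \wedge \del P)$, pass to $\iu \mathop{\rm Tr}(\del P \wedge \delbar P)$ by the graded anticommutativity of one-forms under the trace, and verify reality via $\overline{p^i_j} = p^j_i$. Your only deviation is a harmless strengthening: you prove $\delbar P \wedge \delbar P = 0$ at the matrix level by applying $\delbar$ to the identity $\delbar P \cdot P = 0$, whereas the paper only needs the weaker trace-level vanishing $\mathop{\rm Tr}(\delbar P \wedge \delbar P) = -\mathop{\rm Tr}(\delbar P \wedge \delbar P) = 0$.
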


\begin{proof}
Taking into account the identities from Lemma~\ref{lem:diff-ids} we rewrite
\begin{gather*}
\kafot = \mathop{\rm Tr}(P \cdot (\del P + \delbar P) \wedge \diff P) = \mathop{\rm Tr}(\delbar P \wedge \diff P) = \mathop{\rm Tr}(\delbar P \wedge \del P) + \mathop{\rm Tr}(\delbar P \wedge \delbar P) \\ \phantom{\kafot}
{} = \mathop{\rm Tr}(\delbar P \wedge \del P).
\end{gather*}
Since $\kafo = - \iu \kafot$, we conclude that $\kafo$ is a~$(1, 1)$-form with the claimed expression.
Next we show that $\kafo$ is real, that is $\overline{\kafo} = \kafo$.
Using the identity $\overline{p^i_j} = p^j_i$ we compute
\[
\overline{\kafo} = - \iu \sum_{i, j} \overline{\del p^i_j} \wedge \overline{\delbar p^j_i}
= - \iu \sum_{i, j} \delbar p^j_i \wedge \del p^i_j = \kafo. \tag*{\qed}
\]
\renewcommand{\qed}{}
\end{proof}

\subsection{K\"ahler forms}

The notion of~K\"ahler form on a~complex manifold is a~standard one.
The most convenient characterization for us is the following (as in~\cite[Lemma 3.1.7]{huybrechts}, for instance): a~\emph{K\"ahler form} on $X$ is a~closed real $(1, 1)$-form $\omega$ which is positive definite, that is locally of~the form $\omega = \frac{\iu}{2} \sum_{i, j} h_{i j} \mathrm{d} z_i \wedge \mathrm{d} \bar{z}_j$ with $h_{i j}(x)$ a~positive definite Hermitian matrix for all $x \in X$. This is strictly related to the notion of~K\"ahler metric on $X$.

Our goal is to show that $\kafo$ from \eqref{eq:kahler-form} is a~K\"ahler form on the flag manifold $U / K_S$.
So far we have shown that it is a~closed real $(1, 1)$-form on $U / K_S$, which is also $U$-invariant.
To conclude that it is a~K\"ahler form we still need to show that it is positive definite.

As~$\kafo$ is $U$-invariant, it suffices to show that it is positive definite at the origin $o := K_S$, namely the identity coset of~$U / K_S$.
Recall that, since $U / K_S \cong G / P_S$, we can identify the \emph{holomorphic} tangent space at $o$ with $\lieg / \liep_S \cong \lien_S^-$ and the \emph{anti-holomorphic} tangent space with $\lien_S^+$.
In~other words, the former is the span of~the root vectors $\{f_\alpha\}_{\alpha \in \homroot}$, while the latter is the span of~the root vectors $\{e_\alpha\}_{\alpha \in \homroot}$.
We~will denote by~$f_\alpha^\star$ and $e_\alpha^\star$ the corresponding dual elements in~their respective cotangent spaces.

From the above discussion and the fact that $\kafo$ is a~$(1, 1)$-form, it follows that it must take the following form at the origin
\[
\kafo_o = \iu \sum_{\alpha, \beta \in \homroot} c_{\alpha \beta} f_\alpha^\star \wedge e_\beta^\star, \qquad c_{\alpha \beta} \in \bbC.
\]
To show that $\kafo$ is positive definite it suffices to show that the matrix with entries $c_{\alpha \beta}$ is positive definite.
This is what we will check in~the following.

First we will require the following simple lemma.

\begin{Lemma}
Let $\alpha \in \homroot$. Then $v_\alpha = (\rho_S, \alpha)^{-1/2} f_\alpha v_1$ is a~vector of~norm $1$.
\end{Lemma}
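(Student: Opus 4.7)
The plan is to compute $\|f_\alpha v_1\|^2$ directly, exploiting the $\lieu$-invariance of the inner product on $V(\rho_S)$ together with the highest weight property of $v_1$. First I would identify the adjoint of $f_\alpha$ in any unitary $\lieu$-representation. Since $\lieu$ contains $e_\alpha - f_\alpha$ and $\iu(e_\alpha + f_\alpha)$, both of which act as skew-Hermitian operators, one obtains the two relations $e_\alpha^* - f_\alpha^* = f_\alpha - e_\alpha$ and $e_\alpha^* + f_\alpha^* = e_\alpha + f_\alpha$; adding these gives $e_\alpha^* = f_\alpha$, hence also $f_\alpha^* = e_\alpha$. This is the classical counterpart of the $*$-structure already used in the quantum part of the paper.

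Next I would expand
\[
(f_\alpha v_1, f_\alpha v_1) = (v_1, e_\alpha f_\alpha v_1) = (v_1, [e_\alpha, f_\alpha] v_1) + (v_1, f_\alpha e_\alpha v_1).
\]
The second term vanishes because $v_1$ is a highest weight vector: for any $\alpha \in \Delta^+(\lieg)$, the vector $e_\alpha v_1$ would have weight $\rho_S + \alpha$, which cannot occur in $V(\rho_S)$. For the first term, the relation $[e_\alpha, f_\alpha] = h_\alpha$ together with the normalization $\lambda(h_\alpha) = (\lambda, \alpha)$ yields $h_\alpha v_1 = (\rho_S, \alpha) v_1$, and hence $(f_\alpha v_1, f_\alpha v_1) = (\rho_S, \alpha)$ using $\|v_1\| = 1$.

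Finally I would verify that $(\rho_S, \alpha) > 0$ for every $\alpha \in \homroot$, which both ensures that the square root is well-defined and that $f_\alpha v_1 \neq 0$. Writing $\alpha = \sum_i n_i \alpha_i$ with $n_i \geq 0$ and using $(\omega_i, \alpha_j) = \tfrac{1}{2}(\alpha_j, \alpha_j) \delta_{ij}$, one gets $(\rho_S, \alpha) = \sum_{i \in I \setminus S} n_i (\alpha_i, \alpha_i)/2$; since $\alpha \notin \Delta(\liel_S)$, at least one coefficient $n_i$ with $i \notin S$ is nonzero, so the sum is strictly positive. Dividing $f_\alpha v_1$ by $(\rho_S, \alpha)^{1/2}$ then produces a unit vector, as claimed. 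There is no real obstacle here; the only point that requires a bit of care is pinning down the correct adjoint relation $f_\alpha^* = e_\alpha$, which is forced by the compact real form.
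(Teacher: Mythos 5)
Your proposal is correct and follows essentially the same route as the paper: the norm computation $(f_\alpha v_1, f_\alpha v_1) = (v_1, e_\alpha f_\alpha v_1) = (\rho_S,\alpha)$ via $e_\alpha v_1 = 0$ and $[e_\alpha, f_\alpha] = h_\alpha$, together with the observation $(\rho_S, \alpha) > 0$ for $\alpha \in \homroot$, is exactly the paper's argument. You merely make explicit two points the paper leaves implicit, namely the adjoint relation $f_\alpha^* = e_\alpha$ forced by the compact real form and the positivity computation $(\rho_S,\alpha) = \sum_{i \in I \setminus S} n_i (\alpha_i,\alpha_i)/2 > 0$, both of which are correct.
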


\begin{proof}
Recall that $v_1$ is a~highest weight vector of~weight $\rho_S$. Hence we have
\[
e_\alpha f_\alpha v_1 = [e_\alpha, f_\alpha] v_1 = h_\alpha v_1 = (\rho_S, \alpha) v_1.
\]
Now we observe that, since $\rho_S = \sum_{i \in I \backslash S} \omega_i$ and $\alpha \in \homroot$, we have $(\rho_S, \alpha) > 0$.
Hence we can consider the vector $v_\alpha = (\rho_S, \alpha)^{-1/2} f_\alpha v_1$.
To show that it has norm $1$ we compute
\[
(v_\alpha, v_\alpha) = (\rho_S, \alpha)^{-1} (f_\alpha v_1, f_\alpha v_1)
= (\rho_S, \alpha)^{-1} (v_1, e_\alpha f_\alpha v_1) = 1. \tag*{\qed}
\]
\renewcommand{\qed}{}
\end{proof}

Up to now the choice of~the orthonormal weight basis $\{v_i\}_i$ for~$V(\rho_S)$ was arbitrary.
We~will now require that the basis contains all the vectors $v_\alpha$ for~$\alpha \in \homroot$ as in~the lemma above.
Notice that this requirement makes sense, as they all have norm $1$.

\begin{Theorem}
\label{thm:kahler-form}
We~have that $\kafo$ is a~K\"ahler form on $U / K_S$. Moreover
\[
\kafo_o = \iu \sum_{\alpha \in \homroot} (\rho_S, \alpha) f_\alpha^\star \wedge e_\alpha^\star.
\]
\end{Theorem}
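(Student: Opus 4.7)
The earlier lemmas have already shown that $\kafo$ is a closed, real $(1,1)$-form and is left $U$-invariant, so the theorem reduces to two tasks: verifying the displayed formula for $\kafo_o$, and deriving positive definiteness from it. The second task is easy once the first is done, because the formula is manifestly diagonal, and positivity at all other points then follows from $U$-invariance. So the whole proof comes down to computing $\kafo_o$.

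The plan is to start from the formula $\kafo = \iu \mathop{\rm Tr}(\del P \wedge \delbar P) = \iu \sum_{i,j} \del p^i_j \wedge \delbar p^j_i$ and collapse it at $o$. Writing $p^i_j = u^i_1 \overline{u^j_1}$, the Leibniz rule together with the holomorphy identities $\delbar u^i_1 = 0$ and $\del \overline{u^j_1} = 0$ (which underlie Lemma~\ref{lem:diff-ids}) yields
\[
\del p^i_j = (\del u^i_1)\, \overline{u^j_1}, \qquad \delbar p^j_i = u^j_1\, (\delbar \overline{u^i_1}).
\]
Evaluating the zero-form factors at the identity coset gives $u^j_1(e) = \overline{u^j_1}(e) = (v_j, v_1) = \delta_{j 1}$, which kills all terms except $j = 1$ and produces
\[
\kafo_o = \iu \sum_i (\del u^i_1)_o \wedge (\delbar \overline{u^i_1})_o.
\]

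The next step is to expand these two one-forms in the dual bases $\{f_\alpha^\star\}$ and $\{e_\alpha^\star\}$. Pairing with a holomorphic tangent vector $f_\alpha$ at the origin,
\[
(\del u^i_1)_o(f_\alpha) = f_\alpha(u^i_1)\big|_e = (v_i, f_\alpha v_1) = (\rho_S, \alpha)^{1/2} (v_i, v_\alpha),
\]
using the preceding lemma to rewrite $f_\alpha v_1$ in terms of the unit vector $v_\alpha$. Since the weights $\rho_S - \alpha$ for distinct $\alpha \in \homroot$ are all distinct, the $v_\alpha$'s are pairwise orthogonal and can be taken to belong to the chosen orthonormal basis; only $i = v_\alpha$ then contributes, giving $(\del u^{v_\alpha}_1)_o = (\rho_S, \alpha)^{1/2}\, f_\alpha^\star$ and $(\del u^i_1)_o = 0$ otherwise. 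The computation for $(\delbar \overline{u^i_1})_o$ mirrors this: passing complex conjugation through the derivative and using the relation $\overline{e_\alpha} = -f_\alpha$ (governing the Cartan conjugation of $\lieg$ with respect to $\lieu$), one obtains an analogous expression in the $e_\alpha^\star$-direction with matching magnitude $(\rho_S, \alpha)^{1/2}$.

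Substituting back yields the stated expression for $\kafo_o$, from which positive definiteness is immediate: every coefficient $(\rho_S, \alpha)$ is strictly positive, because $\rho_S = \sum_{i \in I \setminus S} \omega_i$ pairs strictly positively with any $\alpha \in \homroot$ (such an $\alpha$ necessarily has at least one simple root from $I \setminus S$ in its support). The diagonal form $\iu \sum_\alpha c_\alpha f_\alpha^\star \wedge e_\alpha^\star$ with $c_\alpha > 0$ is the canonical positive Hermitian form on $T_o$, so $\kafo$ is Kähler. I expect the most delicate bookkeeping to be in the computation of $(\delbar \overline{u^i_1})_o$, where one must consistently reconcile complex conjugation of one-forms with the $\lieu$-conjugation of root vectors; once this convention is pinned down, the rest reduces to straightforward applications of the Leibniz rule and the highest-weight property of $v_1$.
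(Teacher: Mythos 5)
Your proof is correct and is essentially the paper's own argument: reduce to the origin by $U$-invariance, expand the $(1,1)$-form $\kafo_o$ over the basis $f_\alpha^\star \wedge e_\beta^\star$, evaluate the coefficients via the lemma $f_\alpha v_1 = (\rho_S, \alpha)^{1/2} v_\alpha$ (your remark that the weights $\rho_S - \alpha$ are pairwise distinct, so the $v_\alpha$ are orthogonal and fit into one orthonormal weight basis, matches the paper's standing assumption on the basis), and conclude positive definiteness from $(\rho_S, \alpha) > 0$ for $\alpha \in \homroot$. The one place you genuinely deviate is in how the coefficients are extracted: the paper differentiates the products $p^i_j$ directly, arriving at $c_{\alpha\beta} = -\sum_{i,j} f_\alpha\big(p^i_j\big)\, e_\beta\big(p^j_i\big)$ after evaluating the zero-form factor at the identity, whereas you factorize first, $\del p^i_j = \big(\del u^i_1\big)\, \overline{u^j_1}$ and $\delbar p^j_i = u^j_1\, \delbar\, \overline{u^i_1}$, so that $\kafo_o = \iu \sum_i \big(\del u^i_1\big)_o \wedge \big(\delbar\, \overline{u^i_1}\big)_o$ drops out from $u^j_1(e)\, \overline{u^j_1}(e) = \delta_{j 1}$. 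This is a small but real improvement: your collapse is immediate, while the paper's corresponding step asserts $p^i_j(1) = (v_i, v_j) = \delta^i_j$, which is literally a slip --- one has $p^i_j(1) = u^i_1(1)\, \overline{u^j_1}(1) = \delta_{i 1} \delta_{j 1}$, as befits a rank-one projection --- and although the paper's final answer is right, your route never encounters this point. Finally, your hedge on the sign of $\big(\delbar\, \overline{u^i_1}\big)_o$ is appropriate rather than a gap: the paper fixes it by the rule $e_\beta\big(\overline{u^i_1}\big) = -\overline{(v_i, f_\beta v_1)}$, which is exactly the compact conjugation $\tau(e_\beta) = -f_\beta$ you invoke, and the residual overall sign depends only on whether $e_\alpha$ is identified with the conjugate of the holomorphic direction or with the complex-linear extension of real derivations --- an orientation convention the paper also leaves implicit; in either convention the coefficient matrix is diagonal with entries $(\rho_S, \alpha)$ of a single sign, so positive definiteness and the stated formula follow as you say.
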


\begin{proof}
We~have already shown that $\kafo$ is a~closed, real $(1, 1)$-form.
Hence it suffices to show that~$\kafo$ is positive definite (that is, the corresponding symmetric bilinear form is positive definite).
As~$\kafo$ is $U$-invariant, it suffices to show this at the origin of~$U / K_S$.
As~we have already discussed, the $(1, 1)$-form $\kafo$ at the origin has the following form
\[
\kafo_o = \iu \sum_{\alpha, \beta \in \homroot} c_{\alpha \beta} f_\alpha^\star \wedge e_\beta^\star.
\]
To show that $\kafo$ is positive definite it suffices to show that the matrix with entries $c_{\alpha \beta}$ is positive definite.
Observe that $c_{\alpha \beta} = - \iu \kafo_o(f_\alpha, e_\beta)$.

We~will now determine $c_{\alpha \beta}$ using the expression $\kafo = - \iu \sum_{i, j, k} p^i_j \diff p^j_k \wedge \diff p^k_i$.
We~will consider $o = K_S$ as being $1 \in U$, as the result will not depend on the chosen representative. Then
\[
c_{\alpha \beta} = - \sum_{i, j, k} p^i_j(1) f_\alpha\big(p^j_k\big) e_\beta\big(p^k_i\big)
= - \sum_{i, j} f_\alpha\big(p^i_j\big) e_\beta\big(p^j_i\big).
\]
In~the last step we have used that $p^i_j(1) = \delta^i_j$, which follows from $p^i_j(1) = (v_i, v_j)$.
Moreover, since $p^i_j = u^i_1 \overline{u^j_1}$ and $f_\alpha$ is a~derivation at the identity, we have
\[
f_\alpha\big(p^i_j\big) = f_\alpha(u^i_1) \overline{u^j_1}(1) + u^i_1(1) f_\alpha\big(\overline{u^j_1}\big)
= \delta^j_1 f_\alpha\big(u^i_1\big).
\]
Here we have used the fact that $f_\alpha\big(\overline{u^j_1}\big) = 0$ by~weight reasons.
Similar computations show that $e_\beta\big(p^j_i\big) = \delta^j_1 e_\beta\big(\overline{u^i_1}\big)$.
Therefore we obtain the expression
\[
c_{\alpha \beta} = - \sum_i f_\alpha\big(u^i_1\big) e_\beta\big(\overline{u^i_1}\big).
\]
First we will consider $f_\alpha\big(u^i_1\big) = (v_i, f_\alpha v_1)$.
Since $f_\alpha v_1$ is proportional to~$v_\alpha$, it is clear that this is zero unless $v_i = v_\alpha$ (as the chosen basis is orthonormal).
In~this case we have
\[
f_\alpha\big(u^i_1\big) = (v_\alpha, f_\alpha v_1) = (\rho_S, \alpha)^{1/2} (v_\alpha, v_\alpha) = (\rho_S, \alpha)^{1/2}.
\]
Similarly consider $e_\beta\big(\overline{u^i_1}\big) = - \overline{(v_i, f_\beta v_1)}$.
Again this is zero unless $v_i = v_\alpha$, in~which case $e_\beta\big(\overline{u^i_1}\big) = -(\rho_S, \alpha)^{1/2}$.
Hence we conclude that $c_{\alpha \beta} = 0$ for~$\alpha \neq \beta$, while
\[
c_{\alpha \alpha} = (\rho_S, \alpha), \qquad \alpha \in \homroot.
\]
Since $(\rho_S, \alpha) > 0$ for~$\alpha \in \homroot$, we conclude that $\kafo$ is positive definite.
\end{proof}

\begin{Remark}
This result is essentially due to Borel--Hirzebruch,~\cite[Proposition~14.6]{bo-hi}.
See also~\cite[Theorem~7.5]{snow}, taking into account different conventions.
\end{Remark}

\section{Comparison in~the classical limit}
\label{sec:comparison}

In~this last section we will consider an~appropriate classical limit of~the classes $[C(\proj_1)] \in HH^\theta_2\big(\Cqflag\big)$ constructed in~Section~\ref{sec:hochschild}.
We~will show that the corresponding classical two-forms can be identified with the K\"ahler forms on $U / K_S$ constructed in~Section~\ref{sec:kahler-forms}.

\subsection{Remarks on the classical limit}

Informally, the quantized enveloping algebra $\Uqu$ reduces to the enveloping algebra $U(\lieu)$ for~$q \to 1$, which we refer to as the \emph{classical limit}.
This specialization can be made precise with a~bit of~care, see for instance the approach in~\cite[Section~3.4]{hong-kang}.
For our purposes, the only thing we need to know is that the classical limit $q \to 1$ makes sense at the level of~representations.

Next, we want to consider the classical limit of~the quantized coordinate ring $\CqU$.
This was defined as the algebra of~matrix coefficients of~finite-dimensional $\Uqu$-repre\-sentations, hence for~$q \to 1$ it reduces to the algebra of~matrix coefficients of~finite-dimensional \mbox{$U(\lieu)$-repre}\-sentations.
Finally we can identify this algebra with $\CU \subset C^\infty(U, \bbC)$, the algebra of~mat\-rix coefficients of~finite-dimensional $U$-representations.
This is simply done by
\[
c^V_{f, v}(X) = f(X v) \longleftrightarrow c^V_{f, v}(g) = f(g v),
\]
where $X \in U(\lieu)$ and $g \in U$.
Observe that this identification makes sense, since $U$ and $\lieu$ act in~a~compatible way in~each finite-dimensional representation $V$.

\subsection{The comparison}

Consider now the representative $C(\proj_1) \in \Cqflag^{\otimes 3}$ of~the class $[C(\proj_1)] \in HH^\theta_2\big(\Cqflag\big)$.
It corresponds to an~element $\widetilde{C(\proj_1)} \in \bbC[U / K_S]^{\otimes 3}$ under the classical limit, as explained above.
Denote by~$\omega_{\proj_1} \in \Omega^2(U / K_S)$ the corresponding two-form obtained from the Hoch\-schild--Kostant--Rosenberg map.

\begin{Theorem}
The $($rescaled$)$ form $\omega_{\proj_1} \in \Omega^2(U / K_S)$ is a~K\"ahler form.
\end{Theorem}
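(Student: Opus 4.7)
The strategy is to compute the classical limit $\widetilde{C(\proj_1)} \in \bbC[U/K_S]^{\otimes 3}$ explicitly, apply the HKR map to obtain a $2$-form on $U/K_S$, and then recognize it (up to the natural rescaling) as the Kähler form $\kafo$ from~\eqref{eq:kahler-form}. Under the specialization $q \to 1$, the factors $q^{(2\rho,\lambda_i)}$ tend to $1$, so $\mathop{\rm Tr}\nolimits_q$ degenerates to the ordinary matrix trace; the modular automorphism $\theta$ reduces to the identity, so twisted Hochschild homology becomes ordinary Hochschild homology; and the adjoint on the quantum matrix coefficients specializes to complex conjugation on the classical ones. Consequently the entries $p^i_j = u^i_1 (u^j_1)^*$ of the quantum projection $\proj_1$ reduce to the classical projection entries $p^i_j = u^i_1 \overline{u^j_1}$ of~\eqref{eq:class-proj}, and $\widetilde{C(\proj_1)} = \mathop{\rm Tr}((2P-1) \otimes P \otimes P)$.

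Applying the HKR map, which on chains sends $a_0 \otimes a_1 \otimes a_2$ to $a_0\, \diff a_1 \wedge \diff a_2$, I would obtain
\[
\omega_{\proj_1} \,=\, 2\, \mathop{\rm Tr}(P \cdot \diff P \wedge \diff P) \,-\, \mathop{\rm Tr}(\diff P \wedge \diff P) \,=\, 2\kafot \,-\, \mathop{\rm Tr}(\diff P \wedge \diff P),
\]
where $\kafot$ is the $2$-form from Section~\ref{sec:kahler-forms}. It remains to show that the residual trace term vanishes. Splitting $\diff P = \del P + \delbar P$ and invoking Lemma~\ref{lem:diff-ids}, the matrix-valued $2$-forms $\del P \wedge \del P$ and $\delbar P \wedge \delbar P$ both vanish: for instance, $\del P \wedge \del P = (\del P \cdot P) \wedge \del P = \del P \wedge (P \cdot \del P) = 0$, using that $P$ is a matrix of functions and so can be moved past the wedge. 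The two remaining mixed contributions $\mathop{\rm Tr}(\del P \wedge \delbar P)$ and $\mathop{\rm Tr}(\delbar P \wedge \del P)$ are opposite to each other by the graded cyclicity of the trace on matrix-valued $1$-forms, hence their sum is zero.

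Combining these two computations yields $\omega_{\proj_1} = 2\kafot$, and the rescaling by $-\iu/2$ implicit in the statement produces exactly $\kafo = -\iu \kafot$ of~\eqref{eq:kahler-form}. By Theorem~\ref{thm:kahler-form} this is a Kähler form on $U/K_S$, which proves the claim. The most delicate point in the argument is making the classical-limit identifications in the first paragraph rigorous, in particular tracking how the $*$-structure on $\CqU$ specializes to complex conjugation and how $\mathop{\rm Tr}\nolimits_q$ specializes to $\mathop{\rm Tr}$; the computational heart of the proof (the vanishing of $\mathop{\rm Tr}(\diff P \wedge \diff P)$) follows immediately from the projection identities of Lemma~\ref{lem:diff-ids}, and the Kählerness itself has already been secured in Theorem~\ref{thm:kahler-form}.
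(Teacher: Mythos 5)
Your proposal is correct and takes essentially the same route as the paper's proof: specialize $C(\proj_1)$ at $q \to 1$, apply the HKR map $a_0 \otimes a_1 \otimes a_2 \mapsto a_0 \,\diff a_1 \wedge \diff a_2$, show the residual term $\mathop{\rm Tr}(\diff P \wedge \diff P)$ vanishes, and conclude $\omega_{\proj_1} = 2\kafot$ so that Theorem~\ref{thm:kahler-form} gives K\"ahlerness after rescaling. The only (harmless) difference is that you dispose of the residual term via the type decomposition, Lemma~\ref{lem:diff-ids}, and graded cyclicity, whereas the paper does it in one line by graded-commutativity, $\sum_{i,j} \diff p^i_j \wedge \diff p^j_i = -\sum_{i,j} \diff p^j_i \wedge \diff p^i_j = 0$ --- indeed your own final graded-cyclicity observation already yields $\mathop{\rm Tr}(\diff P \wedge \diff P) = -\mathop{\rm Tr}(\diff P \wedge \diff P) = 0$ without any projection identities.
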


\begin{proof}
The representative $C(\proj_1) \in \Cqflag^{\otimes 3}$ is given explicitly by
\[
C(\proj_1) = \sum_{i, j, k} q^{(2 \rho, \lambda_i)} \big(2 p^i_j - \delta^i_j\big) \otimes p^j_k \otimes p^k_i.
\]
The elements $p^i_j \in \CqU$ from \eqref{eq:quantum-proj} correspond to the elements $p^i_j \in \CU$ from \eqref{eq:class-proj} under the classical limit.
Hence we obtain
\[
\widetilde{C(\proj_1)} = \sum_{i, j, k} \big(2 p^i_j - \delta^i_j\big) \otimes p^j_k \otimes p^k_i \in \bbC[U / K_S]^{\otimes 3}.
\]
Now $\omega_{\proj_1}$ is the image of~$\widetilde{C(\proj_1)}$ under the Hoch\-schild--Kostant--Rosenberg map, which is given by~$a_0 \otimes a_1 \otimes \cdots a_n \mapsto a_0 \diff a_1 \wedge \cdots \wedge \diff a_n$. We~obtain the two-form
\[
\omega_{\proj_1} = \sum_{i, j, k} \big(2 p^i_j - \delta^i_j\big) \diff p^j_k \wedge \diff p^k_i \in \Omega^2(U / K_S).
\]
Observe that the second term is zero by~graded-commutativity, since
\[
\sum_{i, j} \diff p^i_j \wedge \diff p^j_i = - \sum_{i, j} \diff p^j_i \wedge \diff p^i_j = 0.
\]
Hence $\omega_{\proj_1}$ can be rewritten as
\[
\omega_{\proj_1} = 2 \sum_{i, j, k} p^i_j \diff p^j_k \wedge \diff p^k_i \in \Omega^2(U / K_S).
\]
This coincides with $\kafo$ as defined in~\eqref{eq:kahler-form}, up to a~constant.
But we know that the latter is a~K\"ahler form on $U / K_S$ according to Theorem~\ref{thm:kahler-form}, which gives the claim.
\end{proof}

\begin{Remark}
For a~quantum \emph{irreducible} flag manifold $\Cqflag$, we have constructed in~\cite{mat-kahler} a~K\"ahler form in~the sense of~\cite{obu17} using the differential calculus of~\cite{heko06}.
It is of~the form $\sum_{i, j, k} q^{(2 \rho, \lambda_i)} p^i_j \diff p^j_k \wedge p^k_i$, up to a~constant.
Hence, by~the same argument as above, it can be identified with a~K\"ahler form on the irreducible flag manifold $U / K_S$.
\end{Remark}

To summarize, the non-trivial classes $[C(\proj_1)] \in HH^\theta_2\big(\Cqflag\big)$ constructed in~this paper correspond to some K\"ahler forms on $U / K_S$, under an~appropriate classical limit.
For this reason we believe they will play an~important role in~the investigation of~the non-commutative complex and K\"ahler geometry of~the quantum flag manifolds $\Cqflag$.

\pdfbookmark[1]{References}{ref}
\LastPageEnding

\end{document}